\DeclareMathOperator{\HW}{H-W}
\newtheorem{thm}{Theorem}[section]
\newtheorem{cor}[thm]{Corollary}
\newtheorem{lem}[thm]{Lemma}
\newtheorem{prop}[thm]{Proposition}
\newtheorem{defn}[thm]{Definition}
\newtheorem{rem}[thm]{Remark} 
\newtheorem{exm}[thm]{Example}
\newtheorem{con}[thm]{Conjecture}
\numberwithin{equation}{section}
\newcommand{\CC}{\mathbb C}
\newcommand{\PP}{\mathbb {P}}
\newcommand{\CA}{\mathbb{A}}
\newcommand{\Z}{\mathbb{Z}}
\newcommand{\Q}{\mathbb{Q}}
\newcommand{\LL}{\mathbb{L}}
\newcommand{\YY}{\mathcal{Y}}
\newcommand{\XX}{\mathfrak{X}}
\newcommand{\fg}{\mathfrak{g}}
\newcommand{\lw}{{\underline{w}}}
\newcommand{\OX}{\mathcal{O}}
\newcommand{\cR}{\mathcal{R}}
\newcommand{\DD}{\mathcal{D}}
\newcommand{\F}{\mathbb{F}}
\newcommand{\Fs}{{F_{\XX/S}}}
\newcommand{\Xp}{{\XX^{(p)}}}
\newcommand{\Yp}{{\YY^{(p)}}}
\newcommand{\homo}{{\mathcal{H}om}}
\newcommand{\pochf}[3]{\left(\frac{#1}{#2}\right)_{#3}}
\newcommand{\shyperg}[1]{{}_{n}F_{n-1}\left(\begin{array}{c}
	{\frac{1}{n+1}},{\frac{2}{n+1}},\cdots,{\frac{n}{n+1}} \\
	{1},{1},\cdots,{1}\end{array};{#1}\right)}
\DeclareMathOperator{\spec}{Spec}
\DeclareMathOperator{\Res}{Res}
\DeclareMathOperator{\Der}{Der}
\DeclareMathOperator{\Sol}{Sol}
\DeclareMathOperator{\Pic}{Pic}
\author{An Huang, Bong Lian, Shing-Tung Yau, Chenglong Yu}
\title[Hasse-Witt, unit roots and periods]{Hasse-Witt matrices, unit roots and period integrals}
\begin{document}
\maketitle
\begin{abstract}
Motivated by the work of Candelas, de la Ossa and Rodriguez-Villegas \cite{candelas}, we study the relations between Hasse-Witt matrices and period integrals of Calabi-Yau hypersurfaces in both toric varieties and partial flag varieties. We prove a conjecture by Vlasenko \cite{vlasenko} on higher Hasse-Witt matrices for toric hypersurfaces following Katz's method of local expansion \cite{Katz1984, Katz1985}. The higher Hasse-Witt matrices also have close relation with period integrals. The proof gives a way to pass from Katz's congruence relations in terms of expansion coefficients \cite{Katz1985} to Dwork's congruence relations \cite{Dwork2} about periods. 
\end{abstract}

\tableofcontents 
\baselineskip=16pt plus 1pt minus 1pt
\parskip=\baselineskip

\section{Introduction}
The relations among Hasse-Witt matrices, unit roots of zeta-functions and period integrals were pioneered in Dwork's work on the variation of zeta-functions of hypersurfaces \cite{Dwork1}, \cite{Dwork2}, \cite{Dwork3}. Some well-known examples are Legendre family
\begin{equation}
y^2=x(x-1)(x-\lambda),
\end{equation}
see Example 8 in \cite{Katz1}; and the Dwork family
\begin{equation}
X_0^{n+1}+\cdots X_n^{n+1}-(n+1)t X_0\cdots X_n=0,
\end{equation}
see 2.3.7.18 and 2.3.8 in \cite{Katz} and also \cite{jdyu}. There is a canonical choice of holomorphic $n$-forms $\omega_\lambda$ for these Calabi-Yau families since they are hypersurfaces in $\PP^n$. These families both have maximal unipotent monodromy at $\lambda=t^{-(n+1)}=0$. The period integral $I_\gamma$ of $\omega_\lambda$ over the invariant cycle $\gamma$ near $\lambda=0$ is the unique holomorphic solutions to the corresponding Picard-Fuchs equation. On the other hand, these families are defined over $\Z$. We can consider the $p$-reductions of these families and the Hasse-Witt matrices associated to $\omega_\lambda$. According to a theorem of Igusa-Manin-Katz, they are solutions to Picard-Fuchs equations mod $p$. We first state the relations for Dwork family, see \cite{jdyu}. The period is given by hypergeometric series
\begin{eqnarray}\label{F_n_n-1}
I_\gamma=F(\lambda) &=& \shyperg{\lambda} \\
	\nonumber
	&=& \sum_{r=0}^{\infty} \frac{\pochf{1}{n+1}{r}
		\pochf{2}{n+1}{r} \cdots\pochf{n}{n+1}{r}}{(r!)^n}
		\lambda^r.
\end{eqnarray}
The Hasse-Witt matrix $\HW_p$ is given by the truncation of $F(\lambda)$
\begin{equation}
\HW_p(\lambda)=\leftidx{^{(p-1)}} F(\lambda)=\sum_{r=0}^{p-1} \frac{\pochf{1}{n+1}{r}
		\pochf{2}{n+1}{r} \cdots\pochf{n}{n+1}{r}}{(r!)^n}
		\lambda^r.
\end{equation}
Here $\leftidx{^{(k)}} F(\lambda)$ means the truncation of $F(\lambda)$ with terms of $\lambda$ of degree less or equal than $k$.
Let \begin{equation}
g(\lambda)={F(\lambda)\over F(\lambda^p)}\in \mathbb{Z}_p[[\lambda]]
\end{equation}
Then $g$ is an element in $\varprojlim_{s\to \infty} \mathbb{Z}_p[\lambda,(\leftidx{^{(p-1)}} F(\lambda))^{-1}]/p^s\mathbb{Z}_p[\lambda,(\leftidx{^{(p-1)}} F(\lambda))^{-1}]$ and it satisfies Dwork congruences
\begin{equation}
g(\lambda)\equiv {\leftidx{^{(p^s-1)}} (F(\lambda))\over  \leftidx{^{(p^{s-1}-1)}} (F)(\lambda^p)}\mod p^s.
\end{equation}
Especially it is related to Hasse-Witt matrix by
\begin{equation}
{F(\lambda)\over F(\lambda^p)}\equiv \leftidx{^{(p-1)}} F(\lambda)\mod p.
\end{equation}
Let $q=p^r$ and $t\in \mathbb{F}_q$. Assume $p\nmid n+1$ and $t^{n+1}\neq 0, 1$ $\HW_p(\lambda)\neq 0$. Then there exists exactly one $p$-adic unit root in the factor of zeta function of Dwork family corresponding to Frobenius action on middle crystalline cohomology. It is given by 
\begin{equation}
g(\hat{\lambda})g(\hat{\lambda}^p)\cdots g(\hat{\lambda}^{p^{r-1}})
\end{equation}
with $\hat{\lambda}$ being the Teichm\"uller lifting under $\lambda\to \lambda^p$.

In this paper, we generalize the above relation to hypersurfaces in toric varieties and partial flag varieties. We first prove the mod $p$ results. Complete intersections are treated in Section \ref{complete}. The key algorithm of Hasse-Witt matrix is a generalization of the result on hypersurfaces in $\PP^n$. See Katz's algorithm 2.7 in \cite{Katz}. For general hypersurfaces in a Fano variety $X$, we use the Cartier operator on ambient space to localize the calculation in terms of local expansion similar to \cite{Katz1985}. When $X$ is toric variety, the algorithm depends on the toric data associated to $X$. The algorithm implies generic invertibility of Hasse-Witt matrices for toric hypersurfaces, generalizing Adolphson and Sperber's result for $\PP^n$ in \cite{adolphson2016}, see remark \ref{toricgeneral} and corollary \ref{toricinvert}. For generalized flag varieties, Bott-Samelson desingularization is used to reduce the calculation to a similar situation as toric varieties. The affine charts on Bott-Samelson varieties also give an explicit algorithm to calculate the power series expansions of period integrals of hypersurfaces in $G/P$.

The second part of the paper applies Katz's local expansion method \cite{Katz1984, Katz1985} to prove a conjecture in \cite{vlasenko}. The crystalline cohomology of the hypersurface family has an $F$-crystal structure. When the Hasse-Witt matrix is invertible, there exists a unit root part of the $F$-crystal. We consider the $p$-adic approximation of the Frobenius matrix on the unit root part. Especially, the Hasse-Witt matrix is the Frobenius matrix mod $p$. In \cite{Katz1985}, Katz gives a $p$-adic approximation of the Frobenius matrix in terms of the local expansions of top forms on a formal chart along a section of the family. In \cite{vlasenko}, Vlasenko constructed a sequence of matrices related to a Laurent polynomial $f$ and proved congruence relations similar to Katz's algorithm in \cite{Katz1985}. The $p$-adic limit is conjectured to be the Frobenius matrix for hypersurfaces in $\PP^n$ when $f$ is a homogenous polynomial. According to Corollary \ref{toricHW} in section \ref{local}, the first matrix $\alpha_1$ mod $p$ appeared in \cite{vlasenko} is the Hasse-Witt matrix for toric hypersurfaces . So it is natural to generalize Vlasenko's conjecture to toric hypersurfaces. We give a proof of the conjecture in section \ref{conj}.  

We first recall some notations for period integral
\subsection{Period Integral}
\label{period integral}
\begin{enumerate}
\item Let $X$ be a smooth semi-Fano variety of dimension $n$ over $\CC$. In this paper $X$ is toric variety or partial flag variety $G/P$ with $P$ parabolic subgroup in a semisimple algebraic group $G$.
\item We denote $V^\vee=H^0(X, \omega_X^{-1})$ to be the space of anticanonical sections.
\item For any nonzero section $s\in V^\vee$, the zero locus $Y_s$ is a Calabi-Yau hypersurface in $X$. 
\item Let $B$ be the set of $s\in V^\vee$ such that $Y_s$ is smooth. Then $B$ is Zariski open subset of $V^\vee$ and there is a family of smooth Calabi-Yau varieties $\pi\colon \YY\to B$ with $Y_s$ as fibers.
\item The section $s$ induces the adjunction formula $\omega_{Y_s}\cong \omega_X\otimes \omega_X^{-1}|_{Y_s}$. The constant function $1$ on the right hand side corresponds to a canonical section $\omega_s\in H^0(Y_s, \omega_{Y_s})$. In other words, the section $\omega_s$ is the residue of rational form ${1\over s}$. Putting $\omega_s$ together, we get a canonical section of $R^0\pi_*(\omega_{\YY/ B})$, denoted by $\omega$.
\end{enumerate}
\subsubsection{Period integral}
Consider the local system $\LL$ on $B$ formed by $H_{n-1}(Y_s, \Q)$, which is the dual of $R^{n-1}\pi_* \Q_\YY$. For any flat section $\gamma$ of $\LL$ on $U\subset B$ an open subset, the period integral $I_\gamma$ is defined by $\int_\gamma \omega$. 
\subsubsection{Picard-Fuchs system}
Let $\DD_{V^\vee}$ be the sheaf of linear differential operators generated by $\Der(V^\vee)$. Then any section $D$ of $\DD_{V^\vee}$ acts on the de-Rham coholomogy sheaf $R^{n-1} \pi_*(\Omega^\bullet(\YY/B))$ via restriction to $B$ and Gauss-Manin connection $\nabla$. Define the sheaf of Picard-Fuchs system for $\omega$ to be $PF(U)=\{{D}\in \DD_{V^\vee}(U)|\nabla({D}|_{U\cap B})\omega=0\}$. Period integrals are solutions to Picard-Fuchs system. In other words, we have ${D} I_\gamma=0$ for any ${D}\in PF(U)$.
\subsubsection{Solution-rank-1 points}
Consider the classical solution sheaf $\Sol=Hom_{\DD_{V^\vee}}(\DD_{V^\vee}/PF, \OX_{V^\vee})$. The solution rank at a point $s\in V^\vee$ is defined to be the dimension of the stalk $\Sol_s$. We will consider the points in $V^\vee$ having solution rank $1$. 
\subsubsection{Special point $s_0$}
\label{special point}
There exist special solution-rank-1 points when $X$ is toric or $G/P$. The theorem we will state is for those special solution-rank-1 points. We characterize $s_0$ up to scaling in terms of its zero locus $Y_{s_0}$ as follows. If $X$ is toric variety, we consider the stratification of $X$ by the torus action. Then $Y_{s_0}$ is the union of toric invariant divisors. If $X=G/P$, we consider the stratification of $X$ by projected Richardson varieties, see \cite{Thomas}. Then $Y_{s_0}$ is the union of projected Richardson divisors. Especially, if $P$ is a Borel subgroup, the divisor $Y_{s_0}$ is the union of Schubert divisors and opposite Schubert divisors. They are proven to have solution rank 1 from GKZ systems and tautological systems by Huang-Lian-Zhu \cite{HLZ}. The unique solution at $s_0$ is realized as a period integral $I_{\gamma_0}$ over invariant cycle $\gamma_0$. The special point $s_0$ is known as the large complex structure limit in the moduli of toric hypersurfaces. We expect the same result holds for flag varieties.
\begin{exm}
Let $X=\PP^n$ with homogenous coordinate $[x_0, \cdots, x_n]$. Then $V$ is identified with space of homogenous polynomials of degree $n+1$. In this case, the special solution-rank-1 point $s_0=x_0\cdots x_n$.
\end{exm}
\subsection{Hasse-Witt matrix}
Next we define the Hasse-Witt matrix. Let $k$ be a perfect field of characteristic $p$. Assume $\pi\colon Y\to S$ is a smooth family of Calabi-Yau variety over $k$ with relative dimension $n-1$. Let $\omega$ be a trivializing section of $R^0\pi_*(\omega_{Y/S})$. Let $\omega^*$ be the dual section of $R^{n-1}\pi_*(\OX_Y)$. The $p$-th power endomorphism of $\OX_Y$ induces a $p$-semilinear map $R^{n-1}\pi_*(\OX_Y)\to R^{n-1}\pi_*(\OX_Y)$ sending $\omega^*$ to $a \omega^*$. Then $\HW_p=a$ as a section of $\OX_S$ is the Hasse-Witt matrix under the basis $\omega^*$. The choice of $\omega$ for Calabi-Yau hypersurfaces is made by adjunction formula similar to period integral.  
\subsection{Statement of the theorem}
Now we state our main theorem. When $X$ is toric or $G/P$, it has an integral model over $\Z$. Let $s_0\in H^0(X, K_X^{-1})$ be the special solution-rank-1 point chosen in section \ref{special point}. Then $s_0$ can be extended as a basis $s_0\cdots s_N$ of $H^0(X, K_X^{-1})$. Let $a_0\cdots a_N$ be the dual basis for $s_0,\cdots s_N$. Suitable choices of $s_0\cdots s_N$ are still basis considering the $p$-reduction of $X$. See section \ref{local} and section \ref{flagv} for the details of choice of $s_i$ and the integration cycle $\gamma_0$. There exists the following truncation relation between Hasse-Witt invariants of hypersurfaces over $\F_p$ and period integrals. It can also be viewed as a relation between mod $p$ solutions to Picard-Fuchs systems and solution over $\CC$.
\begin{thm}
\label{main}
\begin{enumerate}
\item The Hasse-Witt matrix $\HW_p$ defined above are polynomials of $a_I$ of degree $p-1$. 
\item The period integral $I_{\gamma_0}$ defined above can be extended as holomorphic functions at $s_0$ and has the form ${1\over a_0} P({a_I\over a_0})$, where $P({a_I\over a_0})$ is a Taylor series of ${a_{1}\over a_0}, \cdots, {a_{N}\over a_0}$ with integer coefficients. 
\item They satisfy the following truncation relation
\begin{equation}
{1\over a_0^{p-1}} \HW_p= \leftidx{^{(p-1)}} P({a_I\over a_0}) \mod p
\end{equation}
where $\leftidx{^{(p-1)}} P({a_I\over a_0})$ is the truncation of $P$ at degree $p-1$.
\end{enumerate}
\end{thm}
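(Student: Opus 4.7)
The plan is to reduce all three parts to an explicit comparison of two local series expansions of $s = \sum_I a_I s_I$ near the special point $s_0$. For toric $X$, I would fix the dense torus chart on $X$ containing $\gamma_0$, in which $s_0$ trivializes $\omega_X^{-1}$, the basis sections $s_I$ with $I\neq 0$ restrict to Laurent monomials on $(\CC^*)^n$, and the canonical rational form $\Omega$ on $\omega_X^{-1}$ restricts to $\prod d\log x_i$. For the flag case $X=G/P$, I would pull everything back to a Bott-Samelson resolution $\uX \to G/P$ and work in the affine Bott-Samelson chart containing $\gamma_0$, which plays the role of the torus chart.

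For part (1), I would apply the algorithm of Section~\ref{local}---a variant of Katz's algorithm 2.7 in \cite{Katz} obtained via the Cartier operator on the ambient space---to express $\HW_p$, in the basis $\omega_s^*$, as the coefficient of the invariant monomial in the local expansion of $s^{p-1}$. Writing
\begin{equation}
s^{p-1} = (a_0 s_0)^{p-1}\left(1 + \sum_{I\neq 0}\frac{a_I}{a_0}\cdot\frac{s_I}{s_0}\right)^{p-1}
\end{equation}
and expanding by the multinomial theorem yields
\begin{equation}
\frac{\HW_p}{a_0^{p-1}} = \sum_{\substack{k_I \ge 0 \\ |k|\le p-1}} \binom{p-1}{\,p-1-|k|,\,k_1,\ldots,k_N\,}\, c_k \prod_{I\neq 0} \left(\frac{a_I}{a_0}\right)^{k_I},
\end{equation}
where $|k|=\sum_{I\neq 0} k_I$ and $c_k\in\{0,1\}$ is the indicator that $\prod_{I\neq 0}(s_I/s_0)^{k_I}$ equals $1$ in the chosen chart. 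This exhibits $\HW_p$ as a polynomial in the $a_I$ of degree $p-1$, proving (1).

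For part (2), I would represent $I_{\gamma_0}$ as the residue integral $\oint_{\gamma_0} \Omega/s$, which in the chosen chart factors as $\frac{1}{a_0}\oint\bigl(1+\sum_{I\neq 0}(a_I/a_0)(s_I/s_0)\bigr)^{-1}\prod d\log x_i$. Geometric-series expansion and termwise integration pick out exactly the constant monomials, giving
\begin{equation}
I_{\gamma_0} = \frac{1}{a_0}\sum_{k_I\ge 0}(-1)^{|k|}\binom{|k|}{k_1,\ldots,k_N}\, c_k \prod_{I\neq 0}\left(\frac{a_I}{a_0}\right)^{k_I},
\end{equation}
with integer coefficients, which is the claimed form of $P(a_I/a_0)$. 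Part (3) then follows from the elementary congruence
\begin{equation}
\binom{p-1}{p-1-m,\,k_1,\ldots,k_N} = \binom{p-1}{m}\binom{m}{k_1,\ldots,k_N} \equiv (-1)^m\binom{m}{k_1,\ldots,k_N} \pmod{p}
\end{equation}
for $0\le m=|k|\le p-1$, which matches the Hasse-Witt series with the degree-$\le p-1$ truncation of $P$ term by term.

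The main obstacle is the flag case: there is no single torus chart, and one must verify both that a Bott-Samelson chart carries the invariant cycle $\gamma_0$ and that the Cartier-operator computation on $\uX$ descends correctly to $G/P$, so that the combinatorial indicators $c_k$ retain the meaning above under pullback. This is where the projected-Richardson-divisor description of $Y_{s_0}$, together with the explicit local coordinates on the Bott-Samelson desingularization developed in the body of the paper, must be brought in; once these coordinates are in place, the computation of parts (1)--(3) proceeds by the same multinomial bookkeeping as in the toric case.
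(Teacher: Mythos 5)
Your proposal is correct and follows essentially the same route as the paper: identify $\HW_p$ with the constant-term coefficient of $f^{p-1}$ via the Cartier-operator/local-expansion algorithm, identify $I_{\gamma_0}$ with the constant-term coefficient of $f^{-1}$ via the residue over the torus cycle, and match the two by the multinomial congruence $\binom{p-1}{k_1,\ldots,k_N,p-1-|k|}\equiv(-1)^{|k|}\binom{|k|}{k_1,\ldots,k_N}\pmod p$. The obstacle you flag for $G/P$ is precisely what the paper resolves via the Bott--Samelson chart, the trace-map compatibility diagram, and Lemma~\ref{form} identifying $\psi^*(s_0)$ with $t_1\cdots t_n(dt_1\wedge\cdots\wedge dt_n)^{-1}$.
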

\begin{rem}
In characteristic $p$, the conjugate spectral sequence provides a horizontal filtration for relative de-Rham cohomology. In particular, the Hasse-Witt matrix gives part of the coordinate for the projection of $R^{n-1}\pi_*(\OX_\mathcal{Y})$ to the horizontal subbundle in de-Rham cohomology. This is how Katz \cite{Katz} proved elements in Hasse-Witt matrix satisfy Picard-Fuchs equations. On the other hand, period integrals give the coordinate of horizontal sections in relative de-Rham cohomology over $\CC$. The above relations suggest that the horizontal subbundle provided by conjugate spectral sequence can approximate the horizontal section in characteristic zero near some degeneration point when $p\to \infty$.
\end{rem}

\begin{rem}
The Hasse-Witt matrices count rational point on Calabi-Yau hypersurfaces mod $p$ by Fulton's fixed point formula \cite{Fulton}. In the case of Calabi-Yau hypersurfaces in toric varieties, the relation between point counting and period integrals has been studied by Candelas, de la Ossa and Rodriguez-Villegas \cite{candelas}.

\end{rem}

Next we state the relation between period integrals and unit roots of zeta-function of toric hypersurfaces. Let $a_0=1$. The formal power series
\begin{equation}
g(a_I)={P(a_I)\over P(a_I^p)}
\end{equation}
lies in $\varprojlim_{s\to \infty} \mathbb{Z}_p[a_{I_1}\cdots a_{I_N}, (\leftidx{^{(p-1)}} P({a_I}))^{-1}]/p^s\mathbb{Z}_p[a_{I_1}\cdots a_{I_N},(\leftidx{^{(p-1)}} P({a_I}))^{-1}]$ and satisfies Dwork congruences
\begin{equation}
g(a_I)\equiv {\leftidx{^{(p^s-1)}} (P({a_I}))\over  \leftidx{^{(p^{s-1}-1)}} (P)(({a_I})^p)}\mod p^s.
\end{equation}
\begin{thm}
Let $a_I\in \mathbb{F}_q$. Assume the hypersurface $Y$ defined by $\sum a_I s_I$ is smooth and $\HW_p(a_I)\neq 0$. Then there exists exactly one $p$-adic unit root in the factor of zeta function of $Y$ corresponding to Frobenius action on $H^{n-1}_{cris}(Y)$. It is given by the formula
\begin{equation}
g(\hat{a}_I)g(\hat{a}_I^p)\cdots g(\hat{a}_I^{p^{r-1}})
\end{equation}
with $\hat{a}_I$ being the Teichm\"uller lifting under $a_I\to a_I^p$.
\end{thm}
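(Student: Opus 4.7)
The plan is to identify $g$ with the scalar action of $p$-Frobenius on the rank-$1$ unit root subcrystal of $\HH^{n-1}_{cris}(Y)$, and then build the $q$-Frobenius unit root by composing this action along the $\sigma$-orbit of the Teichm\"uller lifting $\hat a_I$. First I would verify existence and uniqueness of the unit root: since $Y/\F_q$ is a smooth Calabi-Yau hypersurface, the Hodge polygon of $\HH^{n-1}_{cris}(Y)$ starts with a unique slope-$0$ segment of length $1$ coming from $\HH^{n-1}(\OX_Y)$, so by Mazur's theorem the Newton polygon has at most one zero slope; the hypothesis $\HW_p(a_I)\neq 0$ forces the two polygons to meet at the first break, producing exactly one unit root. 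Katz's slope decomposition then yields a unique $F$-stable rank-$1$ unit root subcrystal $U\subset\HH^{n-1}_{cris}(Y)$.

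The next step is to compute the Frobenius scalar on $U$. Lift $Y$ to a smooth formal scheme $\widetilde Y$ over $W(\F_q)$ by replacing each coefficient $a_I$ with $\hat a_I$, and work in a formal neighborhood of the degeneration $s_0$ from Section~\ref{special point}. The dual $\omega^*$ of the canonical section $\omega$ generates $\HH^{n-1}_{cris}(Y)/\mathrm{Fil}^1$, and by the invertibility of $\HW_p$ it lifts to a generator of $U$ modulo torsion. Katz's local expansion method, already deployed in this paper to prove Vlasenko's conjecture, then expresses the scalar by which Frobenius acts on this generator, modulo $p^s$, as a ratio of truncations of the power series expansion of $\omega$ around $s_0$.

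By Theorem~\ref{main}, this expansion (normalized by $a_0$) is exactly the series $P(a_I/a_0)$ introduced above the theorem, and its degree-$(p-1)$ truncation recovers $\HW_p/a_0^{p-1}$. Extending the matching to higher $p$-adic orders, the scalar by which $p$-Frobenius acts on $U$ is congruent to
\begin{equation*}
\frac{\leftidx{^{(p^s-1)}}P(\hat a_I)}{\leftidx{^{(p^{s-1}-1)}}P(\hat a_I^p)}\pmod{p^s}
\end{equation*}
for every $s\geq 1$. Passing to the $p$-adic limit using the Dwork congruences stated just above the theorem, this scalar equals $g(\hat a_I)=P(\hat a_I)/P(\hat a_I^p)$. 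Since the $q$-Frobenius is the $r$-fold composition of absolute $p$-Frobenius and each composition shifts the base point by $\sigma$, its action on $U$ is multiplication by $g(\hat a_I)\,g(\hat a_I^p)\cdots g(\hat a_I^{p^{r-1}})$, which is therefore the unique $p$-adic unit root of the zeta-function factor.

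The main obstacle is the extension of Theorem~\ref{main} to the higher $p$-adic truncations needed to match the Frobenius scalar on $U$ with the successive approximations of $g$. The degree-$(p-1)$ case is precisely Theorem~\ref{main}, but the higher cases require compatibility between Katz's local expansion of Frobenius on the unit root subbundle and the higher truncations $\leftidx{^{(p^s-1)}}P$; this is essentially the mechanism developed in the paper's proof of Vlasenko's conjecture, and the main work is to specialize that machinery cleanly to the rank-$1$ unit root situation and to reconcile it with the period expansion $P$ attached to the canonical section at $s_0$.
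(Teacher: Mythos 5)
Your outline follows essentially the same route as the paper's Section~\ref{CYunitroot}: establish uniqueness of the slope-$0$ part from $\HW_p\neq 0$, identify the $p$-Frobenius scalar on the rank-$1$ unit root subcrystal via Katz's local-expansion machinery (Theorem~\ref{main2}), pass to $g$ via the Dwork congruences, and compose over the $\sigma$-orbit of $\hat a_I$ to recover the $q$-Frobenius eigenvalue.

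One step you attribute to "essentially the mechanism developed in the paper's proof of Vlasenko's conjecture" is in fact a separate, elementary ingredient that you should make explicit. Theorem~\ref{main2} produces the Frobenius as $\lim \alpha_{s+1}\sigma(\alpha_s)^{-1}$, where $\alpha_s$ is built from coefficients of $f^{p^s-1}$ — that is, Katz's expansion coefficients, not truncations of the period series $P$ built from $1/f$. The bridge between the two is the multinomial congruence
\begin{equation*}
{p^s-1 \choose k_1,k_2,\cdots, k_l, p^s-1-k}\equiv (-1)^k{k \choose k_1,k_2,\cdots, k_l}\pmod{p^s},
\end{equation*}
which yields $\alpha_s\equiv \leftidx{^{(p^s-1)}}P(a_I)\pmod{p^s}$ in the Calabi-Yau normalization $a_0=1$. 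This is exactly the "pass from Katz's congruence relations in terms of expansion coefficients to Dwork's congruence relations about periods" advertised in the abstract, and it is the $s$-generalization of the computation you already cite from Theorem~\ref{main} (which is only the $s=1$, mod $p$ case). Without it, the ratio $\alpha_{s+1}\sigma(\alpha_s)^{-1}$ is not yet a ratio of truncated periods, so the claim that "Katz's local expansion method...expresses the scalar...as a ratio of truncations of the power series expansion of $\omega$" is a slight conflation. Once this congruence is in hand, the rest of your argument — limit to $g$, Teichm\"uller specialization, composing $r$ times — is exactly what the paper does.
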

Similar unit root formulas for general-type toric hypersurfaces and Calabi-Yau hypersurfaces in $G/P$ is given in \ref{conj} and \ref{flagF}.

\subsection{Acknowledgment} The authors are grateful to Mao Sheng, Zijian Yao, Dingxin Zhang, Jie Zhou for their interests and helpful discussions. 

\section{Local expansions and Hasse-Witt matrices}
\label{local}
Now we prove a algorithm of calculating Hasse-Witt matrices of hypersurfaces of $X$ in terms of local expansions of the sections. The key ingredient is to related the Hasse-Witt operator of Calabi-Yau or general type families $\YY$ to the Cartier operator on $X$. Then we apply the algorithm to toric and generalized flag varieties. Especially this recovers the algorithm for $\PP^n$.

We make the following assumptions for this section.
\begin{enumerate}
\item Let $X^n$ be a smooth projective variety defined over $k$ and satisfies $H^n(X, \OX)=H^{n-1}(X, \OX)=0$
\item Let $L$ be an base point free line bundle on $X$ and $V^\vee=H^0(X, L)\neq 0$ and $W^\vee=H^0(X,L\otimes K_X)\neq 0$. Let $a_1^\vee, a_2^\vee\cdots a_N^\vee$ and $e_1^\vee \cdots e_r^\vee$ be basis of $V^\vee$ and $W^\vee$.
\item Consider the smooth hypersurfaces $\YY$ over $S\hookrightarrow V^\vee-\{0\}$. Let $\XX$ be $X\times S$ and $i\colon \YY\to \XX$ be the embedding. The projections to $S$ are denoted by $\pi$
\item Let $F_S$ be the absolute Frobenius on $S$ and $\Xp=\XX\times_{F_S} S$ the fiber product. Then we have absolute Frobenius $F_\XX$the relative Frobenius $\Fs \colon X\to \Xp$. Denote $W\colon \Xp\to \XX$ and $\pi^{(p)}\colon \Xp\to S$ to be the projections. The corresponding diagram for family $\YY$ is defined in a similar way.
\end{enumerate}

Consider the following diagram
\begin{equation}
\label{exactsequence}
\begin{tikzcd}
0\arrow{r}&W^*L^{-1} \arrow{r}{f}\arrow{d}{f^{p-1}} &\OX_\Xp \arrow{r}\arrow{d}{F} &i_*\OX_{\Yp} \arrow{r}\arrow{d}{F} &0 \\
0\arrow{r}& {\Fs}_* L^{-1}\arrow{r}{f} &\Fs_*\OX_\XX\arrow{r} &i_*{F_{\YY/S}}_*\OX_\YY\arrow{r} &0
\end{tikzcd}
\end{equation}
The map $f^{p-1}\colon W^*L^{-1}\to {\Fs}_* L^{-1}$ is induced by ${\Fs}^*W^*L^{-1}=F_\XX^*L^{-1}\cong L^{-p}$ multiplied by $f^{p-1}$. 
This induces the diagram
\begin{equation}
\begin{tikzcd}
R^{n-1}\pi^{(p)}_*(\OX_{\YY^{(p)}})\arrow{r}\arrow{d}{F} & R^n\pi^{(p)}_*(W^*L^{-1})\arrow{d}{f^{p-1}}\\
R^{n-1}\pi_*({F_{\YY/S}}_*\OX_\YY)\arrow{r} & R^n\pi_*(L^{-1})
\end{tikzcd}
\end{equation}
The two horizontal maps are isomorphism. The left vertical map is the Hasse-Witt operator $\HW\colon F_S^*(R^{n-1}\pi_*(\OX_\YY))\cong R^{n-1}\pi^{(p)}_*(\OX_{\YY^{(p)}})\to R^{n-1}\pi^{(p)}_*({F_{\YY/S}}_*\OX_\YY)\cong R^{n-1}\pi_*\OX_\YY$. 

\begin{defn}
The basis $e_1^\vee \cdots e_r^\vee$ of $H^0(X, K_X\otimes L)$ induces a basis of $R^0\pi^*(\omega_{\YY/S})$ by residue map and dual basis $e_1\cdots e_r$ of $R^{n-1}\pi_*(\OX_\YY)$ under Serre duality. The Hasse-Witt matrix $a_{ij}$ is defined by $\HW(F_S^*(e_i))=\sum_j a_{ij}e_j$.
\end{defn}

Let $C_{X/S}\colon \omega_{\XX/S}\to \omega_{\Xp/S}$ be the top Cartier operator. For any coherent sheaf $M$ on $\XX$, the Grothendieck duality 
\begin{equation}
\Fs_*{\homo}(M,  \omega_{\XX/S})\cong \homo(\Fs_*M,  \omega_{\Xp/S})
\end{equation}
is related to $C_{X/S}$ by the natural pairing
\begin{equation}
\Fs_*{\homo}(M,  \omega_{\XX/S})\otimes \Fs_*M\to \omega_{\Xp/S}
\end{equation}
sending $g\otimes m$ to $C_{X/S}(g(m))$. Consider $M= L^{-p}$. Since $\Fs_* L^{-p}\cong W^* L^{-1}$, we have
\begin{equation}
\Fs_*{\homo}(L^{-p},  \omega_{\XX/S})\cong \homo(W^* L^{-1},  \omega_{\Xp/S})
\end{equation}
Then we have a morphism induced by multiplication by $f^{p-1}$
\begin{equation}
\Fs_*{\homo}(L^{-1},  \omega_{\XX/S})\to \Fs_*{\homo}(L^{-p},  \omega_{\XX/S})\cong \homo(W^* L^{-1},  \omega_{\Xp/S}).
\end{equation}
After taking $R^0\pi^{(p)}$ on both sides, we have an morphism
\begin{equation}
R^0\pi_*{\homo}(L^{-1},  \omega_{\XX/S})\to R^0\pi_*\homo(W^* L^{-1},  \omega_{\Xp/S}).
\end{equation}
This the dual of 
\begin{equation}
R^n\pi^{(p)}_*(W^*L^{-1})\to R^n\pi_*(L^{-1})
\end{equation}
Now we can conclude the dual of Hasse-Witt matrix is given by the following algorithm. Let $(t_1, \cdots, t_n)$ be local coordinate of $X$ at a point $x$. 
Denote $g(t)=\sum a_I t^I$ to be a formal power series. Then define $\tau(g)=\sum_J a_I t^J$ with $I=(p-1, \cdots, p-1)+pJ$. Fix a local trivialization section $\xi$ of $L$ on an Zariski open section $U$ containing $x$. Then any section ${e_i^\vee\over \xi}$ is a section of $\omega_X|_U$ and has the form $h_i(t)dt_1\wedge  dt_2 \cdots \wedge dt_n$. Under the same trivilization, the section ${e_i^\vee f^{p-1}\over \xi^p}$ has the form as $g_i(t)dt_1\wedge  dt_2 \cdots \wedge dt_n$. Then we claim $\tau(g_i)$ has the form $\tau(g_i)=\sum_j a_{ji} h_j$. 
\begin{thm}
\label{localHW}
The matrix $a_{ij}$ defined above is the Hasse-Witt matrix under the basis $e_1^\vee \cdots e_r^\vee$. 
\end{thm}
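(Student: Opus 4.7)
The plan is to trace the Hasse-Witt operator through the two commutative diagrams displayed before the theorem, dualize via Serre and Grothendieck duality to turn the calculation into one on global sections of $L\otimes\omega_\XX$, and finally evaluate the resulting map in a local trivialization where the Cartier operator takes the explicit form of the $\tau$ operation.

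First, using the short exact sequence \eqref{exactsequence} together with the hypotheses $H^{n-1}(X,\OX) = H^n(X,\OX) = 0$, both horizontal edge maps in the second displayed diagram become isomorphisms (flat base change reduces the vanishing of $R^{n-1}\pi^{(p)}_*\OX_\Xp$ and $R^n\pi^{(p)}_*\OX_\Xp$ to the assumed vanishing on $X$). Consequently the Hasse-Witt operator is identified with
\begin{equation*}
R^n\pi^{(p)}_*(W^*L^{-1}) \xrightarrow{f^{p-1}} R^n\pi_*(L^{-1}).
\end{equation*}

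Next I would dualize via relative Serre duality. The Serre-dual of $R^n\pi_*(L^{-1})$ is $R^0\pi_*(L\otimes\omega_{\XX/S}) = W^\vee\otimes \OX_S$ with basis $\{e_i^\vee\}$, while the Grothendieck-duality isomorphism
\begin{equation*}
\Fs_*\homo(L^{-p},\omega_{\XX/S}) \cong \homo(W^*L^{-1},\omega_{\Xp/S})
\end{equation*}
supplied by the Cartier operator $C_{X/S}$ likewise identifies the Serre-dual of $R^n\pi^{(p)}_*(W^*L^{-1})$ with $W^\vee\otimes \OX_S$. Under these identifications the dual of the Hasse-Witt operator is the map on global sections induced by multiplication by $f^{p-1}$ composed with the Grothendieck-duality pairing; and by the very definition $\HW(F_S^*e_i)=\sum_j a_{ij}e_j$, this dual sends $e_j^\vee \mapsto \sum_i a_{ij} e_i^\vee$.

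The final step is the local evaluation. Fix coordinates $(t_1,\dots,t_n)$ and a trivialization $\xi$ of $L$ on $U$, and write
\begin{equation*}
e_i^\vee/\xi = h_i\, dt_1\wedge\cdots\wedge dt_n, \qquad e_i^\vee f^{p-1}/\xi^p = g_i\, dt_1\wedge\cdots\wedge dt_n.
\end{equation*}
Then the Grothendieck-duality pairing reduces locally to applying $C_{X/S}$ to $g_i\, dt_1\wedge\cdots\wedge dt_n$. On the affine model $\spec k[t_1,\dots,t_n]$ the Cartier operator sends $t^I\, dt_1\wedge\cdots\wedge dt_n$ to $t^J\, dt_1\wedge\cdots\wedge dt_n$ when $I=(p-1,\dots,p-1)+pJ$ and to $0$ otherwise, so $C_{X/S}(g_i\, dt_1\wedge\cdots\wedge dt_n) = \tau(g_i)\, dt_1\wedge\cdots\wedge dt_n$. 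Expanding in the basis $\{h_j\}$ yields $\tau(g_i)=\sum_j a_{ji}h_j$ with the same coefficients $a_{ij}$ as appear in the definition of the Hasse-Witt matrix.

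The main technical obstacle is the duality bookkeeping: one must verify that the residue-based basis $\{e_i\}$ of $R^{n-1}\pi_*\OX_\YY$ is genuinely Serre-dual to $\{e_i^\vee\}$ under the identifications coming from the adjunction short exact sequence, and that the Grothendieck-duality pairing used to identify $\Fs_*\homo(L^{-p},\omega_\XX)$ with $\homo(W^*L^{-1},\omega_\Xp)$ coincides with the locally computed Cartier operator with no spurious Frobenius twist. Once these compatibilities are checked on an affine cover, the theorem follows by gluing via the functoriality of Grothendieck duality.
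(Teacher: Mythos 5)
Your proposal follows essentially the same route as the paper: identify the Hasse--Witt operator with the $f^{p-1}$-multiplication map $R^n\pi^{(p)}_*(W^*L^{-1})\to R^n\pi_*(L^{-1})$ via the long exact sequence and the vanishing hypotheses, dualize using Serre duality and the Grothendieck-duality description of $\Fs_*\homo(L^{-p},\omega_{\XX/S})$ in terms of the Cartier operator, and then compute the Cartier operator in local coordinates as the $\tau$ operation. The duality bookkeeping you flag as the remaining obstacle is exactly what the paper also leaves implicit, so the argument is complete to the same standard.
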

Notice that $\tau$ is $p^{-1}$-semilinear $\tau(h^p g)=h \tau(g)$. So the same algorithm works if we use a rational section $\xi$.

Now we specialize this algorithm to toric hypersurfaces or Calabi-Yau hypersurfaces. Let $X$ be a smooth complete toric variety defined by a fan $\sigma$. The $1$-dimensional primitive vectors $v_1,\cdots v_N$ correspond to toric divisors $D_i$. Assume $L=\OX(\sum a_i D_i)$ with $a_i\geq 1$. Let $\Delta=\{v\in \mathbb{R}^n|\langle v, v_i\rangle\geq -a_i\}$ and $\mathring{\Delta}$ the interior of $\Delta$. Then $H^0(X, L)$ has a basis corresponding to $u_I\in \Delta \cap \Z^n$ and $H^0(X, L\otimes K_X)$ has basis $e_i^\vee$ identified with $u_i\in \mathring{\Delta} \cap \Z^n$. Let $f=\sum a_I t^{u_I}$ be the Laurent series representing the universal section of $H^0(X, L)$ and $f^{p-1}=\sum A_{u} t^u$. Then we have 
\begin{cor}
\label{toricHW}
The Hasse-Witt matrix of hypersurface family over $|L|$ under the basis $e_i^\vee\in \mathring{\Delta} \cap \Z^n$ is given by $a_{ij}=A_{p u_j-u_i}$.
\end{cor}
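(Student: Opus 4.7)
The plan is to apply Theorem \ref{localHW} at a point of the open dense torus $T \subset X$, using standard monomial coordinates and a monomial trivialization of $L$. By the remark following Theorem \ref{localHW}, I may take $\xi$ to be a rational trivializing section; the natural choice is $\xi = 1$, the canonical section of $L = \OX(\sum a_i D_i)$ corresponding to $u = 0 \in \Delta$, which is non-vanishing on $T$ (its divisor is supported on the toric boundary). Under this trivialization the universal section $f = \sum_I a_I s_I$ reads $f/\xi = \sum_I a_I t^{u_I}$, matching the Laurent polynomial in the statement.

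The next step is to write $h_i$ and $g_i$ explicitly as Laurent polynomials. Since $L \otimes K_X \cong \OX(\sum (a_i - 1) D_i)$, its monomial sections are indexed by $u$ with $\langle u, v_i\rangle > -a_i$, i.e., by $\mathring{\Delta} \cap \Z^n$, in agreement with the chosen basis. Pairing the trivialization $\xi = 1$ with the invariant volume form $\omega_T = dt_1 \wedge \cdots \wedge dt_n / (t_1 \cdots t_n)$ on $T$, the form $e_i^\vee/\xi$ restricts to $t^{u_i} \omega_T$, so that $h_i(t) = t^{u_i - \mathbf{1}}$ with $\mathbf{1} = (1, \ldots, 1)$. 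Multiplying by $f^{p-1}$ gives $g_i(t) = t^{u_i - \mathbf{1}} \sum_u A_u t^u = \sum_u A_u t^{u + u_i - \mathbf{1}}$.

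Finally, I would apply $\tau$ and read off the coefficient of $h_j$. By definition, $\tau$ retains only the monomials with exponent in $(p-1)\mathbf{1} + p\Z^n$, rescaling to $J = (I - (p-1)\mathbf{1})/p$. So the coefficient of $h_j(t) = t^{u_j - \mathbf{1}}$ in $\tau(g_i)$ is the coefficient in $g_i$ of $t^{(p-1)\mathbf{1} + p(u_j - \mathbf{1})} = t^{p u_j - \mathbf{1}}$, i.e.\ the value of $A_u$ at $u + u_i - \mathbf{1} = p u_j - \mathbf{1}$, forcing $u = p u_j - u_i$. Hence this coefficient equals $A_{p u_j - u_i}$, and the formula claimed in the corollary follows from Theorem \ref{localHW}. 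There is no substantial obstacle beyond bookkeeping; the only care needed is in the toric dictionary between $\OX(\sum c_i D_i)$ and lattice points, and in using a rational rather than regular trivialization of $L$ so that $\xi = 1$ is legitimate.
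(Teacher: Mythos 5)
Your proof is correct and follows essentially the same route as the paper's: choose the torus chart for a maximal cone of the fan, take the canonical monomial trivialization of $L$ (which the paper constructs as $s = s_0/\theta$ and you identify directly as the section with divisor $\sum a_i D_i$), compute $h_i = t^{u_i - \mathbf{1}}$ and $g_i = \sum_u A_u t^{u + u_i - \mathbf{1}}$, and read off the coefficient under $\tau$. The only caveat is a transposition ambiguity: Theorem~\ref{localHW} writes $\tau(g_i) = \sum_j a_{ji} h_j$, so the coefficient $A_{pu_j - u_i}$ you extract is literally $a_{ji}$; both you and the paper record it as $a_{ij}$, which is a harmless index bookkeeping issue inherited from the statement of the algorithm.
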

\begin{proof}
After an action of $SL(n,\Z)$, we can assume $v_1\cdots v_n$ is the standard basis of $\Z^n$. Then $t_1,\cdots, t_n$ is an affine chart on $X$. We choose a section of $L\otimes K_X$ to be $s_0$ corresponding to origin in $\mathring{\Delta} \cap \Z^n$ and a meromorphic section of $K_X$ to be $\theta={dt_1\wedge  dt_2 \cdots \wedge dt_n\over t_1\cdots t_n}$. Let $s={s_0\over \theta}$ be a meromorphic section of $L$. Then
\begin{equation}
{e_i^\vee\over s}={t^{u_i}\over t_1\cdots t_n}dt_1\wedge  dt_2 \cdots \wedge dt_n.
\end{equation}
If we view $f$ as a section in $H^0(X, L)$, then
\begin{equation}
{f\over s}=t_1\cdots t_n \sum_I a_It^{u_I}
\end{equation}
Hence ${e_i^\vee f^{p-1}\over s^p}=g_i dt_1\wedge  dt_2 \cdots \wedge dt_n$ with
\begin{equation}
g_i={t^{u_i} (\sum_I a_It^{u_I})^{p-1}\over t_1\cdots t_n}=\sum_u A_ut^{u+u_i-\mathbf{1}}.
\end{equation}
Here $\mathbf{1}=(1,\cdots, 1)$. So 
\begin{equation}
\tau(g_i)=\sum_v A_u t^v
\end{equation}
with $u+u_i-\mathbf{1}=pv+(p-1)\mathbf{1}$.
On the other hand, we have 
\begin{equation}
\tau(g_i)=\sum_j a_{ji}t^{u_j-\mathbf{1}}.
\end{equation}
So $a_{ij}=A_{p u_j-u_i}$.
\end{proof}

\begin{rem}
When $X$ is $\PP^n$, Corollary \ref{toricHW} gives the same algorithm as Katz \cite{Katz}. In \cite{vlasenko}, Vlasenko defines the higher Hasse-Witt matrices for Laurent polynomial $f$. When $f$ is a homogenous polynomial of degree $d$, the first matrix $\alpha_1$ in \cite{vlasenko} mod $p$ is the Hasse-Witt matrix for hypersurface $Y$ in $\PP^n$. The $p$-adic limit of the matrices is conjectured to give the Frobenius matrix of the unit root part of $H^{n-1}_{cris}(Y)$, which is a dual analogue of matrices by Katz \cite{Katz1}. Corollary \ref{toricHW} proves that $\alpha_1$ mod $p$ is also the Hasse-Witt matrix for toric hypersurfaces. Hence it is natural to generalize Vlasenko's conjecture to toric hypersurfaces.
\end{rem}

If $X$ is any smooth variety satisfying the assumptions in this section and $L=K_X^{-1}$, then we have a Calabi-Yau family. In this case, the algorithm coincides with the criterion for Frobenius splitting of $X$ respect to $Y$. The basis of $H^0(X, L\otimes K_X)$ is chosen to be constant function $1$. The Hasse-Witt matrix is a function $a$ on $S$. We can choose the trivializing section of $L$ to be $(dt_1\wedge  dt_2 \cdots \wedge dt_n)^{-1}$. The local algorithm in this case it the following. 
\begin{cor}
\label{cyHW}
Let $f=g(t)(dt_1\wedge  dt_2 \cdots \wedge dt_n)^{-1}$. Then the Hasse-Witt matrix $a$ is given by $\tau(g^{p-1})$. More explicitly $a$ is the coefficient of $(t_1\cdots t_n)^{p-1}$ in local expansion of $(g(t))^{p-1}$.
\end{cor}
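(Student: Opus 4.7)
The plan is to specialize the local expansion algorithm of Theorem \ref{localHW} to the Calabi-Yau setting $L = K_X^{-1}$. The crucial reduction is that $L\otimes K_X \cong \OX_X$ is trivial, so under the standing hypothesis $H^n(X,\OX) = H^{n-1}(X,\OX) = 0$ (which forces $X$ connected) we have $W^\vee = H^0(X, \OX_X) \cong k$. Hence we may take $r = 1$ and the unique basis element to be $e_1^\vee = 1$; correspondingly the Hasse-Witt ``matrix'' degenerates to a single scalar function $a$ on $S$.

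Next I would apply the recipe of Theorem \ref{localHW} with the local trivialization $\xi = (dt_1\wedge\cdots\wedge dt_n)^{-1}$ of $L = K_X^{-1}$ around the chosen point. With this choice, $e_1^\vee/\xi = dt_1\wedge\cdots\wedge dt_n$, so the ``$h_i$'' of Theorem \ref{localHW} is simply $h_1(t) = 1$. Since by hypothesis $f = g(t)\,\xi$, a direct substitution gives $e_1^\vee f^{p-1}/\xi^p = g(t)^{p-1}/\xi = g(t)^{p-1}\, dt_1\wedge\cdots\wedge dt_n$, whence $g_1(t) = g(t)^{p-1}$.

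Theorem \ref{localHW} then yields the identity $\tau(g^{p-1}) = a\cdot h_1 = a$. Unpacking the definition of $\tau$---which extracts from $\sum_I a_I\, t^I$ the reindexed series $\sum_J a_{(p-1,\ldots,p-1)+pJ}\, t^J$---the constant-in-$t$ piece corresponds to $J = 0$ and equals the coefficient of $t^{(p-1)\mathbf{1}} = (t_1\cdots t_n)^{p-1}$ in the local expansion of $g(t)^{p-1}$. Since the right-hand side $a\cdot h_1 = a$ is already constant in $t$, this constant term is $a$, giving the explicit formula.

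The statement is really just a notational specialization of Theorem \ref{localHW}, so there is no substantive obstacle to surmount. The only points requiring care are to match the adjunction identifications correctly so that the trivial section $1 \in H^0(X,\OX_X)$ corresponds (under $\omega_X \otimes \omega_X^{-1} \cong \OX_X$ and residue) to the canonical holomorphic volume form $\omega_s$ on the Calabi-Yau hypersurface used to define $\HW_p$, and to check that the specific choice of trivialization $\xi$ collapses both $e_1^\vee/\xi$ and $\xi^p$ into the simple shapes above, making the statement immediate.
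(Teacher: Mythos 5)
Your proposal is correct and matches the paper's own (implicit) argument: the paper likewise obtains Corollary \ref{cyHW} by specializing Theorem \ref{localHW} to $L=K_X^{-1}$, taking the constant function $1$ as the basis of $H^0(X,L\otimes K_X)$ and $\xi=(dt_1\wedge\cdots\wedge dt_n)^{-1}$ as the trivialization, so that $h_1=1$, $g_1=g^{p-1}$, and $a=\tau(g^{p-1})$. The extraction of the coefficient of $(t_1\cdots t_n)^{p-1}$ as the constant term of $\tau$ is exactly as you describe.
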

\begin{rem}
For any closed point $s\in S(k)$, the corresponding section $f_s^{p-1}\in H^0(X, \omega_X^{1-p})$ determines a Frobenius splitting of $X$ compatibly with $Y_s$ if and only if $a(s)\neq 0$. It is also equivalent to $Y_s$ being Frobenius split. Especially, Corollary \ref{toricHW} implies the well-known fact that toric variety $X$ is Frobenius split compatibly with torus invariant divisors. See Chapter 1 of \cite{Brion}. 
\end{rem}
\begin{proof}[Proof of Theorem \ref{main} for toric $X$] 
Following the previous notations, let $X$ be smooth complete toric variety and $L=K_X^{-1}=\OX_X(\sum_i D_i)$. Then the basis of $H^0(X, L)$ is identified with the integral points $u_I$ in the polytope $\Delta=\{v\in \mathbb{R}^n|\langle v, v_i\rangle\geq -1\}$. The universal section $f(t)=\sum a_I t^{u_I}$ with $u_0=(0,\cdots, 0)$. Then $\HW_p$ is the coefficient of constant term in $f^{p-1}$ according to Corollary \ref{toricHW} or \ref{cyHW}. On the other hand, the period integral 
\begin{equation}
I_\gamma={1\over (2\pi \sqrt{-1})^n}\int_\gamma {dt_1\wedge \cdots \wedge dt_n\over t_1\cdots t_n f(t)}
\end{equation}
along the cycle $\gamma\colon |t_1|=|t_2|=\cdots |t_n|=1$ is the coefficient of constant term in the Laurent expansion of $f^{-1}$. So
\begin{equation}
I_\gamma={1\over a_0}(1+\sum_{k=1}^\infty (-1)^k \sum_{k_1u_{I_1}+\cdots +k_lu_{I_l}=0, \sum k_j=k, I_j\neq 0} {k \choose k_1,k_2,\cdots, k_l}({a_{I_1}\over a_0})^{k_1}\cdots ({a_{I_l}\over a_0})^{k_l})
\end{equation}
and 
\begin{equation}
\HW_p=a_0^p(1+\sum_{k=1}^{p-1} \sum_{k_1u_{I_1}+\cdots +k_lu_{I_l}=0, \sum k_j=k, I_j\neq 0} {p-1\choose k_1,k_2,\cdots, k_l, p-1-k}({a_{I_1}\over a_0})^{k_1}\cdots ({a_{I_l}\over a_0})^{k_l})
\end{equation}
Then apply the congruence relation
\begin{equation}
{p-1 \choose k_1,k_2,\cdots, k_l, p-1-k}\equiv (-1)^k{k \choose k_1,k_2,\cdots, k_l}\mod p
\end{equation}
in the two expansions to get the conclusion.
\end{proof}

\begin{rem}[General toric hypersurfaces]
\label{toricgeneral}
The same argument also applies to general-type toric hypersurfaces. The entries in Hasse-Witt matrix are truncations of period integrals. The results for hypersurfaces in $\PP^n$ are proved by Adolphson and Sperber in \cite{adolphson2016}. We follow the notations in Corollary \ref{toricHW}. The sections $s\in H^0(X, L\otimes K_X)$ determines a section of $R^{0}\pi_*(\YY,\omega_{\YY/ B})$ via residue map and we can define period integral of $s$ in a similar way as Calabi-Yau hypersurfaces. Let $s_0\in H^0(X, K_X^{-1})$ be the large complex structure limit point with zero locus equal to the union of $D_i$. Let $e^\vee_i$ be the basis of $H^0(X, L\otimes K_X)$ corresponding to $u_i\in \mathring{\Delta} \cap \Z^n$ and denote $s_i=s_0\otimes e_i^\vee \in H^0(X, L)$. Let $f=\sum a_I t^{u_I}$ be the universal section of $L$. In the Laurent series expression of $f$, the section $s_i$ defined above is identified with multi-index $t^{u_i}$. The period integral of $e_i^\vee$ along the cycle $\gamma\colon |t_1|=|t_2|=\cdots |t_n|=1$ near $s_j$ is given by 
\begin{equation}
I_{\gamma,i}={1\over (2\pi \sqrt{-1})^n}\int_\gamma {t^{u_i}dt_1\wedge \cdots \wedge dt_n\over t_1\cdots t_n f(t)}
\end{equation}
and it is equal to the coefficient of $t^{-u_i}$ in the Laurent expansion of $f^{-1}$. On the other hand, the $ij$th entry $a_{ij}$ of the Hasse-Witt matrix under the basis $e_1^\vee\cdots e_r^\vee$ is given by the coefficient of $t^{pu_j-u_i}$ in the Laurent expansion of $f^{p-1}$. So we have the following 
\begin{enumerate}
\item The function $a_{ij}$ on $S$ are polynomials of $a_I$ of degree $p-1$. 
\item The period integral $I_{\gamma, i}$ is a holomorphic functions at $s_j$ and has the form ${1\over a_j} P_i({a_I\over a_j})$, where $P_i({a_I\over a_j})$ is a Taylor series of ${a_{I}\over a_j}$ with integer coefficients. 
\item They satisfies the following truncation relation
\begin{equation}
{1\over a_j^{p-1}} a_{ij}= \leftidx{^{(p-1)}} (P_i({a_I\over a_j})) \mod p
\end{equation}
where $\leftidx{^{(p-1)}} (P_i({a_I\over a_j}))$ is the truncation of $P$ at degree $p-1$.
\end{enumerate}
Since the period integral of $$\omega_i={t^{u_i}dt_1\wedge \cdots \wedge dt_n\over t_1\cdots t_n f(t)}$$ satisfies the corresponding Gel'fand-Kapranov-Zelevinski hypergeometric differential system, the entries $a_{ij}$ of Hasse-Witt matrices are mod $p$ solutions to the same differential system. See \cite{adolphson2014} for mod $p$ solutions to general hypergeometric systems. In \cite{adolphson2016},  Adolphson and Sperber also proved the generic invertibility of Hasse-Witt matrices for hypersurfaces in $\PP^n$. Similar idea gives the same result for toric hypersurfaces.
\end{rem}

\begin{cor}
\label{toricinvert}
The Hasse-Witt matrices for generic smooth toric hypersurface are not degenerate. In other words, the determinant $\det (a_{ij})\neq 0$. 
\end{cor}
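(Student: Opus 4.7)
The entries $a_{ij}=A_{pu_j-u_i}$ are, by Corollary \ref{toricHW}, polynomials of degree $p-1$ in the coordinates $\{a_I\}_{u_I\in\Delta\cap\Z^n}$, so $\det(a_{ij})$ is a polynomial in these same coordinates with integer coefficients. Because the smooth locus $B\subset V^\vee$ is Zariski open and nonempty, it suffices to show that this polynomial is not identically zero; its nonvanishing locus will then meet $B$ in a nonempty open, which is the corollary.

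To exhibit nonvanishing I would follow the strategy of Adolphson--Sperber for $\PP^n$ in \cite{adolphson2016} and identify a single monomial in $\det(a_{ij})$ whose coefficient is nonzero. The natural candidate is $M=\prod_i a_i^{p-1}$, where $a_i$ is the coordinate attached to the interior lattice point $u_i\in\mathring\Delta\cap\Z^n$. In the Leibniz expansion
\[
\det(a_{ij})=\sum_{\sigma\in S_r}\mathrm{sgn}(\sigma)\prod_i A_{pu_{\sigma(i)}-u_i},
\]
the identity permutation produces $\prod_i A_{(p-1)u_i}$, and within each diagonal factor the term $a_i^{p-1}$ occurs with multinomial coefficient $\binom{p-1}{p-1}=1$, so the identity permutation contributes $+1$ to the coefficient of $M$.

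The core step is to rule out compensating contributions from other ways of producing $M$. A contribution from a permutation $\sigma$ corresponds to a nonnegative integer matrix $(m_{i,l})$ with row sums $p-1$, column sums $p-1$, and vector constraint $\sum_l m_{i,l}u_l=pu_{\sigma(i)}-u_i$, weighted by $\mathrm{sgn}(\sigma)\prod_i\binom{p-1}{(m_{i,l})_l}$. Rewriting the vector constraint as
\[
\frac{pu_{\sigma(i)}-u_i}{p-1}=\sum_l\frac{m_{i,l}}{p-1}u_l
\]
forces the point on the left to be a convex combination of the interior lattice points. I would combine this with the extremality of the vertex-like interior points of $\mathrm{conv}(\{u_l\})$, the column-sum constraint, and the mod-$p$ multinomial identities already exploited in the proof of Theorem \ref{main} to conclude that all contributions to the coefficient of $M$ sum to $+1$ modulo $p$, hence are nonzero.

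The main obstacle is precisely this combinatorial/convex-geometric cancellation analysis: in \cite{adolphson2016} it is handled via the affine structure of the simplex, and for a general polytope $\Delta$ one has to replace that input by an extremal-point analysis on the Newton polytope together with a careful sign-counting argument. Should the global combinatorial argument prove too delicate in full generality, a fallback is to restrict to a smooth subfamily of $f$ (for instance, supported on a subset of lattice points adapted to a chosen vertex of $\mathring\Delta$) on which the Hasse-Witt matrix becomes block triangular with explicitly nonzero diagonal, and then conclude generic invertibility on the full family by semicontinuity.
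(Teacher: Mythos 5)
Your plan is the paper's plan: the paper also reduces the corollary to showing that the coefficient of $M=\prod_i a_i^{p-1}$ (product over the interior lattice points) in $\det(a_{ij})$ is nonzero, phrased there as computing the constant term of $\det(B_{ij})$ with $B_{ij}=a_{ij}/a_j^{p-1}=\leftidx{^{(p-1)}}P_i(a_I/a_j)$, and it uses exactly the extremal-point device you name. The one thing you have not done is the step you yourself flag as the main obstacle, so let me record that it is short and needs neither the mod-$p$ multinomial identities nor your fallback: there is literally only one contribution to the coefficient of $M$, so it equals $1$ over $\Z$, not merely mod $p$. In your notation, a contribution is a matrix $(m_{i,l})$ supported on interior indices with all row and column sums equal to $p-1$ and $\sum_l m_{i,l}u_l=pu_{\sigma(i)}-u_i$; rewrite this as $pu_{\sigma(i)}=u_i+\sum_l m_{i,l}u_l$, i.e. $u_{\sigma(i)}$ is an average of $p$ interior lattice points (dividing by $p$ rather than by $p-1$ is cleaner than your normalization, since it keeps the distinguished point inside the hull). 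Now let $u_l$ be a vertex of $\mathrm{conv}(\mathring{\Delta}\cap\Z^n)$ and take the row $i$ with $\sigma(i)=l$: extremality forces every point in the average to equal $u_l$, hence $u_i=u_l$, so $i=l$, $\sigma(l)=l$, and $m_{l,j}=(p-1)\delta_{jl}$. The column-$l$ sum is then already exhausted by row $l$, so $m_{i,l}=0$ for $i\neq l$; delete $u_l$, pass to a vertex of the hull of the remaining interior points, and induct. This yields $\sigma=\mathrm{id}$ and $m_{i,l}=(p-1)\delta_{il}$ as the unique solution, with total coefficient $\mathrm{sgn}(\mathrm{id})\prod_i\binom{p-1}{p-1}=1$. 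With that paragraph written out, your argument coincides with the paper's proof; the semicontinuity reduction to nonvanishing of the polynomial $\det(a_{ij})$ at the start is also as in the paper (implicitly), and is fine.
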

\begin{proof}
Consider the determinant of matrix $(B_{ij})=(\leftidx{^{(p-1)}} (P_i({a_I\over a_j})))=({1\over a_j^{p-1}} a_{ij})$. The entry $\leftidx{^{(p-1)}}(P_i({a_I\over a_j}))$ has the form
\begin{equation}
\sum_{k=0}^{p-1} (-1)^k \sum_{u_{I_1}+\cdots +u_{I_k}=(k+1)u_j-u_i} ({a_{I_1}\over a_j})\cdots ({a_{I_k}\over a_j}).
\end{equation}
The indices $I_l$ are not required to be distinct. The constant term in $B_{ij}=\delta_{ij}$. Now we prove the constant term in $\det B$ is $1$. Let $\epsilon$ be a permutation of $r$-elements. Assume 
\begin{equation}
{a_{I_1^1}\over a_1}\cdots {a_{I_{k_1}^1}\over a_1}\cdot {a_{I_1^2}\over a_2}\cdots {a_{I_{k_2}^2}\over a_2}\cdots {a_{I_1^r}\over a_r}\cdots {a_{I_{k_r}^r}\over a_r}
\end{equation}
be a constant term appearing in the product $B_{\epsilon(1)1}\cdots B_{\epsilon(r)r}$. Then all indices $I_m$ appearing in the numerator correspond to interior integer points $u_i\in \mathring{\Delta} \cap \Z^n$ and satiesfy
\begin{equation}
u_{I_1^i}+\cdots+u_{I_{k_i}^i}+u_{\epsilon(i)}=({k_i}+1)u_i.
\end{equation}
Consider the vertex $u_{l}$ of the convex polytope generated by all $u_i\in \mathring{\Delta} \cap \Z^n$. Since the convex expression for such $u_{l}$ is unique, the indices $I_1^l=\cdots={k_l}^l={\epsilon(l)}=l$. Hence other terms in the product does not involve $u_l$. We can delete the vertices and consider the convex polytope generated by the remaining $u_i$ and get $\epsilon(i)=i$ inductively. Then the only constant term is $1$.
\end{proof}

\section{Generalized flag vareities}
\label{flagv}
Now we prove similar proposition for generalized flag variety $X=G/P$ using Corollary \ref{cyHW}. There is a natural candidate for large complex structure limit in Calabi-Yau hypersurfaces family of $G/P$, which is the union of codimension one stratum of projections of Richardson varieties, denoted by $Y_0$. See \cite{Thomas} for the definition of $Y_0$ and \cite{HLZ} for indentification of $Y_0$ as solution rank 1 point of Picard-Fuchs system. In the proof of toric Calabi-Yau families, we only used the following fact. There is an affine chart $(t_1\cdots t_n)\in \mathbb{A}_\Z^n$ on $X_\Z$ with $s_0=t_1 \cdots t_n (dt_1\cdots dt_n)^{-1}$. So we expect that $Y_0=Y_1+\cdots +Y_n+W$,where $Y_1, \cdots, Y_n$ has complete intersections at some point $x$ and $Z$ is an effective divisor outside $x$. But this only happens in some special cases, for example, projective spaces and Grassmannian $G(2,4)$. In general, the projections of Richardson varieties are not intersecting transversely to one point.  We need the Bott-Samelson-Demazure-Hansen type resolution of projections of Richardson varieties to lift the anticanonical sections to rational anticanonical sections. This construction is also used in the proof of Frobenius splitting for projections of Richardson varieties, see \cite{Thomas}.

Now let $\psi\colon Z\to X$ be a proper birational morphism between smooth varieties $Z$ and $X$ over $k$. Let $\omega_Z\cong\psi^*\omega_X+E$, where $E$ is a Weil divisor supported on exceptional divisor. Then we have $\psi^*\omega_X^{-1}\cong\omega_Z^{-1}+E$ inducing the isomorphism 
\begin{equation}
\psi_*(\omega_Z^{-1}+E)\cong \psi_*(\psi^*\omega_X^{-1})\cong \omega_X^{-1}.
\end{equation}
This isomorphism is induced by pulling back anticanonical sections on $X$ to anticanonical sections on $Z$ with poles along $E$ and hence fits in the commutative diagram of sheaves
\begin{equation}
\begin{tikzcd}
F_*(\omega_X^{1-p})\arrow{r}\arrow{d}{\hat{\tau}} & \psi_*F_*(\omega_Z^{1-p}((p-1)E))\arrow{d}{\hat{\tau}}\\
\OX_X\arrow{r} &  \psi_*(\OX_Z(E))
\end{tikzcd}
\end{equation}
Here $\hat{\tau}$ is the same trace map induced by Cartier operator as follows. If $\sigma=\sum_I f_I t^I (dt^1\wedge \cdots \wedge dt^n)^{1-p}$ is a local section of $\omega_X^{1-p}$, then $\hat{\tau}(F_*(\sigma))=\sum_J f_I t^J$ with $I=(p-1, \cdots, p-1)+pJ$. The map $\hat{\tau}\colon F_*(\omega_Z^{1-p}((p-1)E))\to \OX_Z(E)$ is defined as follows $\hat{\tau}(F_*(\sigma))=\hat{\tau}(F_*({1\over \eta^p}\eta^p \sigma))=\hat{\tau}({1\over \eta}F_*(\eta^p\sigma))={1\over \eta}\hat{\tau}(F_*(\eta^p\sigma))$. Here $\eta$ is local defining section of $E$. Then $\eta^p\sigma$ is a holomorphic section of $\omega_Z^{1-p}$ and $\hat{\tau}(F_*(\eta^p\sigma))$ is defined the same as $X$.
After taking global sections, we reduce the calculation of Hasse-Witt matrix to $Z$. If we have a section $s_0\in H^0(X, \omega_Z^{-1}(E))$ with the desired property as toric case, then similar conclusion follows. Note that we need to take into account meromorphic sections. When $E$ is union of some coordinate hypersurfaces at a point $x$, the same formula for $\hat{\tau}$ in terms of Laurent expansion of $\sigma$ in local coordinates still applies.

Now we apply the discussion above to Bott-Samelson-Demazure-Hansen varieties. They arise as resolutions of singularities of Schubert varieties and projections of Richardson varieties. See \cite{Brion}, section 2, \cite{Brion2005}, or \cite{Thomas}. First we fix some notations. Let $G$ be simple complex Lie group with Lie algebra $\fg$. Fix upper Borel subgroup $B=B^+$ and lower Borel subgroup $B^-$of $G$. Denote the simple roots by $\alpha_{1}\cdots \alpha_{l}$.  Let $W$ be the Weyl group and $s_{i}\in W$ the simple reflection generated by $\alpha_i$. Let $w=s_{i_1}\cdots s_{i_n}$ be a reduced expression for $w\in W$ and we denote it by $\lw=(s_{i_1},\cdots, s_{i_n})$. Let $P_{i_j}$ be the minimal parabolic subgroup corresponding to simple root $\alpha_{i_j}$. Then the Bott-Samelson variety $Z_\lw$ is defined to be $P_{i_1}\times\cdots \times P_{i_l}/B^n$. Here the right action by $B^n$ is defined by $(p_1,\cdots, p_n)\cdot (b_1,\cdots, b_n)=(p_1b_1,b_1^{-1}p_2b_2, \cdots, b_{n-1}^{-1}p_nb_n)$. The image of $(p_1,\cdots, p_n)$ under the quotient map is denoted by $[p_1,\cdots, p_n]$. 

We now recall some basic properties of Bott-Samelson variety. The map $\psi_\lw\colon Z_\lw\to G/B$ defined by $[p_1,\cdots, p_n]\mapsto p_1\cdots p_n$ is a birational map to the Schubert variety $X_w=BwB/B$. Let $Z_{\lw(j)}=P_{i_1}\times\cdots \hat{P}_{i_j} \cdots \times P_{i_l}/B^{n-1}$ be a divisor of $Z_\lw$ via the embedding $[p_1,\cdots, \hat{p}_j, \cdots, p_n]\mapsto [p_1,\cdots, 1, \cdots, p_n]$. The boundary of $Z_\lw$ is defined to be $\partial Z_\lw=Z_{\lw(1)}+\cdots+Z_{\lw(n)}$. These components have normal crossing intersection at $[1, \cdots, 1]$. Let $L(\lambda)=G\times_B k_{-\lambda}$ be the equivariant line bundle on $G/B$ associated to character $\lambda$ and $L_\lw(\lambda)=\psi_\lw^*L(\lambda)$. Then $\omega_{Z_\lw}\cong \OX_{Z_\lw}(-\partial {Z_\lw})\otimes L_\lw(-\rho)$ with $\rho$ being the sum of fundamental weights. 

From the previous discussion for toric case, we are looking for a special section $s_0\in H^0(X, \omega_X^{-1})$ and suitable affine chart on $Z_\lw$. Since the Picard group of $G/B$ is generated by opposite Schubert divisors, we have a section $\sigma$ of $L(\rho)$ vanishing exactly along all opposite Schubert divisors. Let $\tilde{s}_0$ be the tensor product of $\psi_\lw^*\sigma$ and canonical section of $\OX_{Z_\lw}(\partial {Z_\lw})$. Then $\tilde{s}_0$ vanishes along $\partial {Z_\lw}$ and preimage of opposite Schubert divisors. Let $U_{i_j}^-$ be the negative unipotent root subgroup $P_{i_j}\cap U^-$. The natural map $U_{i_1}^-\times \cdots \times U_{i_n}^-\to Z_\lw$ gives an affine neighborhood of $[1,\cdots , 1]$ which is isomorphic to $\CA^n$ with coordinate $(t_1, \cdots, t_n)$. Then $Z_{\lw(j)}$ on this affine chart is defined by $t_j=0$. The image of this chart under $\psi$ is inside the opposite Schubert cell $C^{id}=B^{-}B/B$. So $\tilde{s}_0$ vanishes with simple zero along coordinate hyperplanes on this chart. After rescaling, we can take $\tilde{s}_0=t_1\cdots t_n (dt_1\wedge\cdots \wedge dt_n)^{-1}$. Let $W^P$ be the set of minimal representatives in cosets $W/W_P$ and $w_P$ be the longest element in $W^P$ with reduced expression $\lw_P$. Then $\psi\colon Z_{\lw_P}\to G/P$ is birational and it is an isomorphism restricted to $Z_{\lw_P}-\partial Z_{\lw_P}\to Bw_PB/B\to Bw_PP/P$.The exceptional divisor is supported on $\partial Z_{\lw_P}$. Next we identify $\tilde{s}_0$ with a special anticanonical form defined on $X=G/P$. Let $X^v_w=X^v\cap X_w$ be the intersection of Schubert variety $X_w=BwB/B$ with opposite Schubert variety $X^v=B^-vB/B$. The image in $X=G/P$ is denoted by $\Pi_w^v$. This forms a stratification of $X$. The codimension one strata form an anticanonical divisor $\Pi_1+\cdots +\Pi_s$. See \cite{Thomas}, section 3. (Note that our notations for Schubert variety and opposite Schubert variety are different from \cite{Thomas}.) So there is an anticanonical section $s_0$ vanishing to the first order along $Y_0=\Pi_1\cup\cdots \cup\Pi_s$.
\begin{lem}
\label{form}
The two anticanonical sections are related by $\tilde{s}_0=\psi^*(s_0)$ up to a rescaling.
\end{lem}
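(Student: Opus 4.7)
The plan is to exhibit $\tilde s_0$ and $\psi^*(s_0)$ as (rational) sections of a common line bundle on $Z_{\lw_P}$ and to show they have the same divisor, so that their ratio is a nonzero constant. Since $\psi\colon Z_{\lw_P}\to G/P$ is birational between smooth varieties, there is an effective divisor $E$ supported in $\partial Z_{\lw_P}$ with $\psi^*\omega_X^{-1}\cong\omega_{Z_{\lw_P}}^{-1}(E)$, so $\psi^*(s_0)$ is a section of $\omega_{Z_{\lw_P}}^{-1}(E)$, and the holomorphic section $\tilde s_0$ of $\omega_{Z_{\lw_P}}^{-1}$ embeds into the same sheaf via the inclusion $\omega_{Z_{\lw_P}}^{-1}\hookrightarrow\omega_{Z_{\lw_P}}^{-1}(E)$. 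Their ratio is then a rational function on the smooth projective variety $Z_{\lw_P}$, so it suffices to prove that the two sections have equal divisors.

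Using the identification $\omega_{Z_{\lw_P}}^{-1}\cong\mathcal{O}(\partial Z_{\lw_P})\otimes L_{\lw_P}(\rho)$ recalled above and the construction of $\tilde s_0$ as $\psi_{\lw_P}^*\sigma$ tensored with the canonical section of $\mathcal{O}(\partial Z_{\lw_P})$, the divisor of $\tilde s_0$ in $\omega_{Z_{\lw_P}}^{-1}$ is $\partial Z_{\lw_P}+\psi_{\lw_P}^*(D_\sigma)$, where $D_\sigma$ is the zero divisor of $\sigma$ on $G/B$, namely the sum of the opposite Schubert divisors. Promoting $\tilde s_0$ to a section of $\omega_{Z_{\lw_P}}^{-1}(E)$ adds $E$ to its divisor, while $\psi^*(s_0)$ has divisor $\psi^*(Y_0)$ in $\omega_{Z_{\lw_P}}^{-1}(E)$. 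The lemma is thus equivalent to the divisor identity
\begin{equation*}
\psi^*(Y_0)\;=\;\partial Z_{\lw_P}+E+\psi_{\lw_P}^*(D_\sigma)
\end{equation*}
on $Z_{\lw_P}$.

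To verify this identity I would work in two stages. On the open $Z_{\lw_P}-\partial Z_{\lw_P}$, $\psi$ is an isomorphism onto the big Schubert cell $Bw_PP/P$ and both $E$ and $\partial Z_{\lw_P}$ vanish; the identity reduces to the claim that, inside $Bw_PP/P$, the trace of the projected Richardson divisor $Y_0$ coincides with the image under $G/B\to G/P$ of $D_\sigma\cap Bw_PB/B$, which follows from the description of $Y_0$ as the codimension-one Richardson stratum in \cite{Thomas}. Along $\partial Z_{\lw_P}$, each irreducible component $Z_{\lw_P(j)}$ is either $\psi$-exceptional (contributing to $E$) or surjects onto a codimension-one projected Richardson divisor $\Pi_i$ (contributing to $\psi^*(Y_0)$), and the multiplicities are matched using $\omega_{Z_{\lw_P}}\cong\mathcal{O}(-\partial Z_{\lw_P})\otimes L_{\lw_P}(-\rho)$ together with $\psi^*\omega_X\cong\omega_{Z_{\lw_P}}\otimes\mathcal{O}(-E)$.

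The main obstacle is precisely this boundary matching: deciding which of the components $Z_{\lw_P(j)}$ are $\psi$-exceptional and which surject onto the divisors $\Pi_i$, and then computing the pullback multiplicities, depends on the combinatorics of the Bott-Samelson-Demazure-Hansen resolution relative to the projection $G/B\to G/P$. Once this component-by-component bookkeeping is in hand, the identity follows by comparison of Weil divisors and the ratio $\tilde s_0/\psi^*(s_0)$ becomes a nonvanishing regular function on the connected projective variety $Z_{\lw_P}$, hence a nonzero constant.
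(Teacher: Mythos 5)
Your reduction is sound and parallels the paper's: both $\tilde s_0$ and $\psi^*(s_0)$ are rational anticanonical sections, so it suffices to show their divisors on $Z_{\lw_P}$ coincide, and off the boundary $\partial Z_{\lw_P}$ this is immediate from $\psi$ restricting to an isomorphism onto the big cell and from the definitions of $\sigma$ and $Y_0$. Up to that point your proposal is a correct reformulation of the problem.

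However, you explicitly leave the crux unresolved. You write that the "main obstacle" is the boundary matching — deciding which components $Z_{\lw_P(j)}$ are $\psi$-exceptional, which dominate a $\Pi_i$, and with what pullback multiplicities — and you propose "component-by-component bookkeeping" via the combinatorics of the resolution relative to $G/B\to G/P$ without carrying it out. That computation is exactly what the paper avoids. The paper observes that once you know $(\tilde s_0)$ and $(\psi^*(s_0))$ agree on the open locus, their difference is a $\Z$-linear combination $\sum_j(1-n_j)Z_{\lw_P(j)}$ supported on $\partial Z_{\lw_P}$; since both divisors lie in the same line bundle they are linearly equivalent, so this combination is linearly equivalent to zero; and since $Z_{\lw_P(1)},\dots,Z_{\lw_P(n)}$ form a $\Z$-basis of $\Pic(Z_{\lw_P})$ (Brion--Kumar, Exercise 3.1.E(3)), this forces $n_j=1$ for all $j$. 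This Picard-group argument replaces the multiplicity bookkeeping entirely, and it is the missing idea in your proposal: without it, you have reduced the lemma to an unverified combinatorial identity rather than proved it.
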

\begin{proof}
We compare the two divisors $(\tilde{s}_0)$ and $(\psi^*(s_0))$. Since $\sigma$ vanishes along opposite Schubert divisors on $G/B$, then $\tilde{s}_0$ vanishes along the preimage of opposite Schubert divisors under $\psi_{\lw}$ and $\partial Z_\lw$. Let $C_{\lw_P}$ be the Schubert cell and $C^{id}$ the opposite Schubert cell. The restriction of $\psi_{\lw_P}\colon Z_{\lw_P}-\partial Z_{\lw_P}\to C_{\lw_P}$ is an isomorphism. Let $D_i$ be divisors supported on $C_{\lw_P}-C^{id}$. Then $(\tilde{s}_0)=\sum_i \overline{\psi_{\lw_P}^{-1}(D_i)} +\sum_j Z_{\lw_P(j)}$. The restriction of projection $C_{\lw_P}\to G/P$ is also isomorphism on its image. The divisors $\Pi_j$ are exactly the complement of the image of $C_{\lw_P}\cap C^{id}$. So we have $(\psi^*(s_0))|_{\psi_{\lw_P}^{-1}(C_{\lw_P})}=\sum_i \psi_{\lw_P}^{-1}(D_i)$. The exceptional locus of $\psi$ is supported on $\partial_{Z_{\lw_P}}$. So $(\psi^*(s_0))=\sum_i \overline{\psi_{\lw_P}^{-1}(D_i)} +\sum_j n_j Z_{\lw_P(j)}$ as a meromorphic anticanonical section. Since $(\tilde{s}_0)$ and $(\psi^*(s_0))$ are linear equivalent, then $\sum_j Z_{\lw_P(j)}$ and $\sum_j n_j Z_{\lw_P(j)}$ are linear equivalent. On the other hand, the divisors $Z_{\lw_P(1)}\cdots Z_{\lw_P(n)}$ form a basis for $\Pic(Z_{\lw_P})$, see \cite{Brion} Excercise 3.1.E (3). So $n_j=1$. 
\end{proof}
So the Hasse-Witt invariants have similar expansion algorithm as toric case according to the discussion above. On the other hand, the period integral near $s_0$ can also be calculated by pulling-back to $Z_\omega$. The cycle $\gamma\colon |t_1|=|t_2|=\cdots |t_n|=1$ has nontrivial image in $H_n(X-Y_{s_0})$ since the integral of $\int_\gamma {1\over \tilde{s}_0}\neq 0$. This is the unique invariant cycle near $s_0$ since $\dim H_c^n(X-Y_0)=1$. According to Theorem 1.4 in \cite{HLZ}, the period integral $\int_{\psi_*\gamma}{1\over f}=\int_{\gamma}{\psi^*({1\over f})}$ is the unique holomorphic solution to the Picard-Fuchs system near $s_0$. So we proved Theorem \ref{main} for generalized flag variety $X=G/P$. Note that the basis of $H^0(X, \omega_X^{-1})$ including $s_0$ can also  be written down explicitly in terms of standard monomials, see \cite{Brion2003}. This method also gives a way to calculate the power series expansions of period integrals of hypersurfaces in $G/P$.

\begin{rem}
The anticanonical form $s_0$ appears in \cite{rietsch2008} and \cite{Thomas}. In \cite{rietsch2008}, the form $s_0$ is constructed on torus chart of the open Richardson cell $\cR_w^{id}=C^{id}\cap C_w$ and glued together by coordinate transformations. We use the construction in \cite{Thomas} that the complement of $\cR_w^{id}$ is an anticanonical divisor. Lemma \ref{form} proves that $\psi^*(s_0)=t_1\cdots t_n (dt_1\wedge\cdots \wedge dt_n)^{-1}$ on the affine coordinate of $Z_\omega$, which is the local formula on torus chart appeared in \cite{rietsch2008}. This gives an explanation of the footnote in section 7 of \cite{rietsch2008}. The cycle $\gamma$ appears in \cite{rietsch2012} 7.1 for complete flag variety $G/B$, in \cite{marsh2013} Theorem 4.2 for Grassmannians and in \cite{lam2017} 12.4 for general $G/P$.
\end{rem}
Now we give some explicit examples of the resolution and the anticanonical form $s_0$ under the resolution.
\begin{exm}
Let $X$ be Grassmannian $G(2,4)$. Then $X=G/P$ with $G=SL(4)$ and $P=\{
  \left( {\begin{array}{cccc}
   \star & \star &\star &\star  \\
 \star & \star &\star &\star  \\
 0& 0 &\star &\star \\
  0& 0 &\star &\star 
  \end{array} } \right)\}
$. The Weyl group is $S_4$ and $W_P=S_2\times S_2$. The element $w_p=(13)(24)=(23)(34)(12)(23)=s_2s_3s_1s_2$. So $Z_\lw=P_1\times P_2\times P_3\times P_4/B^4$ with $P_1=\{\left( {\begin{array}{cccc}
   \star & \star &\star &\star  \\
0& \star &\star &\star  \\
 0& t_1 &\star &\star \\
  0& 0 &0 &\star 
  \end{array} } \right)\}
$, $P_2=\{\left( {\begin{array}{cccc}
   \star & \star &\star &\star  \\
0& \star &\star &\star  \\
 0& 0 &\star &\star \\
  0& 0 &t_2 &\star 
  \end{array} } \right)\}
$, $P_3=\{\left( {\begin{array}{cccc}
   \star & \star &\star &\star  \\
t_3& \star &\star &\star  \\
 0& 0 &\star &\star \\
  0& 0 &0 &\star 
  \end{array} } \right)\}
$ and $P_4=\{\left( {\begin{array}{cccc}
   \star & \star &\star &\star  \\
0& \star &\star &\star  \\
 0& t_4 &\star &\star \\
  0& 0 &0 &\star 
  \end{array} } \right)\}
$. The largest Schubert cell is $\{\left( {\begin{array}{cccc}
   a & b &1 &0  \\
c& d &0 &1\\
 1& 0 &0 &0 \\
  0& 1 &0 &0 
  \end{array} } \right)\}P/P
$ with coordinates $(a,b,c,d)$. The affine coordinate $(t_1, \cdots, t_4)\in \mathbb{A}^4$ on $Z_{\lw_P}$ is 
$$\{[\left( {\begin{array}{cccc}
   1 & 0 &0 &0  \\
0& 1&0 &0 \\
 0& t_1 &1 &0 \\
  0& 0 &0 &1 
  \end{array} } \right),\left( {\begin{array}{cccc}
   1 & 0 &0 &0  \\
0& 1&0 &0 \\
 0& 0 &1 &0 \\
  0& 0 &t_2 &1 
  \end{array} } \right),\left( {\begin{array}{cccc}
   1 & 0 &0 &0  \\
t_3& 1&0 &0 \\
 0& 0 &1 &0 \\
  0& 0 &0 &1 
  \end{array} } \right),\left( {\begin{array}{cccc}
   1 & 0 &0 &0  \\
0& 1&0 &0 \\
 0& t_4&1 &0 \\
  0& 0 &0 &1 
  \end{array} } \right)]\}.
$$
So the map $\psi\colon Z_{\lw_P}\to X$ under these local charts is given by 
\begin{equation}
\label{G24}
a={1\over t_1t_3}, b=-{t_1 + t_4\over t_1t_2t_3t_4}, c={1\over t_1}, d=-{1\over t_1t_2} 
\end{equation}
Recall the anticanonical section $s_0$ in \cite{HLZ} is given in terms of standard monomials as follows.
Let $\left( {\begin{array}{cccc}
   a_{11} & a_{12}  &a_{13}  &a_{14}   \\
 a_{21} & a_{22}  &a_{23}  &a_{24}  
  \end{array} } \right)$ be the basis of any two plane. The Pl\"ucker coordinates $x_{ij}$ are the determinant of $i,j$ columns. The section $s_0=x_{12}x_{23}x_{34}x_{14}$. In coordinate of Schubert cell, we have $s_0=-ad(ad-bc)(da\wedge db\wedge dc \wedge dd)^{-1}$. A direct calculation using (\ref{G24}) shows that $\psi^*s_0=t_1t_2t_3t_4(dt_1dt_2dt_3dt_4)^{-1}$. The other sections of $H^0(X, L)$ can also be written as  homogenous polynomials of $x_{ij}$ of degree $4$. 
\end{exm}

\begin{rem}
The proof for both toric and flag varieties only depends on the the following fact. There is a torus chart $(t_1, \cdots, t_n)$ on the complement of $Y_{s_0}$ with $s_0=t_1\cdots t_n (dt_1\wedge\cdots \wedge dt_n)^{-1}$ on the chart. So Theorem {main} can hold for more general ambient spaces $X$. This also implies the Frobenius splitting of $X$ compatibly with $Y_0$.
\end{rem}

\section{Complete intersections}
\label{complete}
We further discuss the algorithm for Hasse-Witt matrix for complete intersections. 
\begin{enumerate}
\item Let $X^n$ be a smooth projective variety defined over $k$. Let $L_1,\cdots L_s$ be line bundles on $X$ and $E=\oplus_i L_i$.  Assume the following vanishing conditions $H^i(X, K_X\otimes \wedge^{s-i} E)=H^{i-1}(X, K_X\otimes\wedge^{s-i} E)=0$ for $i=1,\cdots, s$.
\item Let $V_i^\vee=H^0(X, L_i)\neq 0$ and $W^\vee=H^0(X,\det E\otimes K_X)\neq 0$. We further assume the zero locus of a generic element of $V^\vee=H^0(X, E)$ is smooth with codimension $s$. Let $e_1^\vee \cdots e_r^\vee$ be a basis of $W^\vee$. Let $f_i$ be the universal section of $L_i$ and $f=(f_1, \cdots, f_s)$ be universal section of $E$.
\item Consider the family of complete intersections defined by $f$ over the smooth locus $S\hookrightarrow V^\vee-\{0\}$. Let $\XX$ be $\XX\times S$ and $i\colon \YY\to \XX$ is the embedding of universal family. The projections to $S$ are denoted by $\pi$.
\item Let $F_S$ be the absolute Frobenius on $S$ and $\Xp=\XX\times_{F_S} S$ the fiber product. Then we have absolute Frobenius $F_\XX$ the relative Frobenius $\Fs \colon X\to \Xp$. Denote $W\colon \Xp\to \XX$ and $\pi^{(p)}\colon \Xp\to S$ to be the projections. The corresponding diagram for family $\YY$ is defined in a similar way.
\end{enumerate}

We repeat the argument in the hypersurfaces using the Koszul resolution 
\begin{equation}
0\to \wedge^s E^\vee \to \wedge^{s-1} E^\vee \to \cdots \to E^\vee\to \OX_\XX\to i_*\OX_\YY\to 0.
\end{equation}
Standard spectral sequence argument together with the vanishing assumptions gives an isomorphism $R^{n-s}\pi_*(\OX_\YY)\to R^n\pi_*((\det E)^{-1})$. The maps in the Koszul resolution is given as follows. Identify the section of $\wedge^k E^\vee$ with the sections $(f^\vee_{j_1, \cdots, j_k})$ of $\oplus_{j_1,\cdots, j_k} L_{j_1}^\vee\otimes \cdots \otimes L_{j_k}^\vee$ with ordered set $j_1,\cdots, j_k$, such that $j_1,\cdots, j_k$ are distinct and $f_{j_1, \cdots, j_k}^\vee={\pm } f_{j_1^\prime, \cdots, j_k^\prime}^\vee$ if ${j_1, \cdots, j_k}$ is a permutation of ${j_1^\prime, \cdots, j_k^\prime}$ with signature $\pm 1$. Then $(f^\vee_{j_1, \cdots, j_{k+1}})$ is mapped to $(f^\vee_{j_1, \cdots, j_k})=(\sum_j f^\vee_{j_1, \cdots, j_{k},j}f_j)$. We have similar commutative diagram as (\ref{exactsequence}).
\begin{equation}
\begin{tikzcd}
0\arrow{r}&W^*\det E^\vee \arrow{r}{f}\arrow{d}{f^{p-1}} &\cdots &\OX_\Xp \arrow{r}\arrow{d}{F} &i_*\OX_{\Yp} \arrow{r}\arrow{d}{F} &0 \\
0\arrow{r}& {\Fs}_* \det E^\vee\arrow{r}{f} &\cdots &\Fs_*\OX_\XX\arrow{r} &i_*{F_{\YY/S}}_*\OX_\YY\arrow{r} &0
\end{tikzcd}
\end{equation}
The map $f^{p-1}\colon W^*\wedge^{k} E^\vee \to {\Fs}_* \wedge^{k} E^\vee$ is induced by multiplication $(f^\vee_{j_1, \cdots, j_k})\mapsto (({f^\vee_{j_1, \cdots, j_k}})^p f_{j_1}^{p-1}\cdots f_{j_k}^{p-1})$. So we have commutative diagram
\begin{equation}
\begin{tikzcd}
R^{n-s}\pi^{(p)}_*(\OX_{\YY^{(p)}})\arrow{r}\arrow{d}{F} & R^n\pi^{(p)}_*(W^*\det E^\vee)\arrow{d}{f^{p-1}}\\
R^{n-s}\pi_*({F_{\YY/S}}_*\OX_\YY)\arrow{r} & R^n\pi_*(\det E^\vee)
\end{tikzcd}
\end{equation}
with horizontal maps being isomorphisms. The left vertical map is the Hasse-Witt operator $\HW\colon F_S^*(R^{n-s}\pi_*(\OX_\YY))\cong R^{n-s}\pi^{(p)}_*(\OX_{\YY^{(p)}})\to R^{n-s}\pi^{(p)}_*({F_{\YY/S}}_*\OX_\YY)\cong R^{n-s}\pi_*\OX_\YY$. So we have similar definition of Hasse-Witt matrix under basis $e_1^\vee \cdots e_r^\vee$.
\begin{defn}
The basis $e_1^\vee \cdots e_r^\vee$ of $H^0(X, K_X\otimes \det E)$ induces a basis of $R^0\pi^*(\omega_{\YY/S})$ by residue map and dual basis $e_1\cdots e_r$ of $R^{n-s}\pi_*(\OX_\YY)$ under Serre duality. The Hasse-Witt matrix $a_{ij}$ is defined by $\HW(F_S^*(e_i))=\sum_j a_{ij}e_j$.
\end{defn}
The same argument in hypersurfaces case gives us the algorithm of computing Hasse-Witt matrix in terms of local expansion of $f$. Now $f^{p-1}$ is replaced by $(f_1\cdots f_s)^{p-1}$. Under the trivialization $\xi$ of $\det E$ under local coordinates $(t_1, \cdots, t_n)$,  the section ${e_i^\vee\over \xi}$ has the form $h_i(t)dt_1\wedge  dt_2 \cdots \wedge dt_n$ and ${e_i^\vee(f_1\cdots f_s)^{p-1}\over \xi^p}$ has the form as $g_i(t)dt_1\wedge  dt_2 \cdots \wedge dt_n$. Then $\tau(g_i)$ has the form $\tau(g_i)=\sum_j a_{ji} h_j.$ 

On the other hand, the period integral has the form $\int_\gamma \Res {\Omega\over f_1\cdots f_s}=\int_{\gamma^\prime} {\Omega\over f_1\cdots f_s}$ where $\gamma^\prime$ is a cycle in the complement of $\{f_1\cdots f_s=0\}$. So it is the same form as hypersurfaces with $f$ replaced by $f_1\cdots f_s$. The section $f_1\cdots f_s$ also defines a subfamily of hypersurfces in the linear system $|\det E|$. Both Hasse-Witt matrices and period integrals can be calculated with the same algorithm applied to this subfamily. So the truncation relation still holds for complete intersections in both toric variety and flag variety. For example, the statement for toric Calabi-Yau hypersurfaces is as follows. Let $X$ be a smooth toric variety and $K_X^{-1}=D_1+\cdots +D_s$ be a partition of toric invariant divisors. Let $L_i=\OX(D_i)$. Let $f_{ij}$ be a basis of $H^0(X, L_i)$ consisting of monomials with $f_{i0}$ the defining section of $D_i$. The universal section is $f=(\sum_j b_{ij}f_{ij})_i$. Then the period integral of the unique invariant cycle near $f_0=(f_{i0})$ has the form ${1\over b_{10}\cdots b_{s0}}P({b_{1j_1}\cdots b_{sj_s} \over b_{10}\cdots b_{s0}})$, in which $P({b_{1j_1}\cdots b_{sj_s} \over b_{10}\cdots b_{s0}})$ is a Taylor series of ${{b_{1j_1}\cdots b_{sj_s} \over b_{10}\cdots b_{s0}}}$ with integer coefficients. The degree-${(p-1)}$ truncation $\leftidx{^{(p-1)}}P({b_{ij}\over b_{i0}})$ multiplied by $(b_{10}\cdots b_{s0})^{p-1}$ is a degree-$(p-1)$ polynomial of $b_{1j_1}\cdots b_{sj_s}$ and gives the Hasse-Witt matrix for the Calabi-Yau complete intersection family.

\section{Frobenius matrices of toric hypersurfaces}
\label{conj}
Now we give a proof of the conjecture in \cite{vlasenko} for toric hypersurfaces. First we state the conjecture. The notations follow \cite{Katz1985}. Let $k$ be a perfect field of characteristic $p$. Let $W(k)$ be the ring of Witt vectors of $k$. Denote $\sigma\colon W\to W$ be the absolute Frobenius automorphism of $W$. For any $W$-scheme $Z$, let $Z_0=Z\otimes_W k$ be the reduction mod $p$. Let $S=\spec(R)$ be an affine $W$-scheme. Let ${R}_\infty=\varprojlim R/p^sR$ and $S_\infty=Spf(R_\infty)$. We fix a Frobenius lifting on $R$ and also denote it by $\sigma$, which is a ring endomorphism $\sigma\colon R\to R$ such that $\sigma(a)=a^p\mod pR$. Let $X$ be a smooth complete toric variety defined by a fan. The $1$-dimensional primitive vectors $v_1,\cdots v_N$ correspond to toric divisors $D_i$. Assume $L=\OX(\sum k_i D_i)$ with $k_i\geq 1$. Let $\Delta=\{v\in \mathbb{R}^n|\langle v, v_i\rangle\geq -k_i\}$ and $\mathring{\Delta}$ the interior of $\Delta$. Then $H^0(X, L)$ has a basis corresponding to $u_I\in \Delta \cap \Z^n$ and $H^0(X, L\otimes K_X)$ has basis $e_i^\vee$ identified with $u_i\in \mathring{\Delta} \cap \Z^n$.  Let $f=\sum a_I t^{u_I}, a_I\in R$ be a Laurent series representing a section of $H^0(X, L)$. Let $(\alpha_s)_{i,j}$ be a matrix with $ij$-th entry equal to the coefficient of $t^{p^su_j-u_i}$ in $(f(t))^{p^s-1}$. The endmorphism $\sigma$ is also extended entry-wisely to matrices. It is proved in \cite{vlasenko} that $\alpha_s$ satisfies the following congruence relations
\begin{thm}[Theorem 1 in \cite{vlasenko}]
\label{cong}
\label{conjecture}
\begin{enumerate}
\item For $s\geq 1$, $$\alpha_s\equiv \alpha_1\cdot \sigma(\alpha_1)\cdots \sigma^{s-1}(\alpha_1)\mod p.$$
\item Assume $\alpha_1$ is invertible in $R_\infty$. Then
$$ \alpha_{s+1}\cdot \sigma(\alpha_s)^{-1}\equiv \alpha_s\cdot \sigma(\alpha_{s-1})^{-1}\mod p^s.$$
\item Under the condition of (2), for any derivation $D\colon R\to R$, we have
$$D(\sigma^m(\alpha_{s+1}))\cdot \sigma^m(\alpha_{s+1})^{-1}\equiv D(\sigma^m(\alpha_{s}))\cdot \sigma^m(\alpha_{s})^{-1}\mod p^{s+m}.$$
\end{enumerate}
\end{thm}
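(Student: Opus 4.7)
The plan is to exploit a Frobenius-type factorization of $f^{p^s-1}$, progressively refined to yield the three congruences. For (1), which is a mod-$p$ assertion, I start from $f^{p^s-1}=f^{p-1}\cdot(f^{p^{s-1}-1})^p$ and use $g(t)^p\equiv\sigma(g)(t^p)\pmod{p}$ to obtain $f^{p^s-1}\equiv f^{p-1}\cdot\sigma(f^{p^{s-1}-1})(t^p)\pmod{p}$. Extracting the coefficient of $t^{p^su_j-u_i}$ gives a convolution over pairs $(v,w)$ with $v+pw=p^su_j-u_i$, $v\in(p-1)\Delta\cap\Z^n$, $w\in(p^{s-1}-1)\Delta\cap\Z^n$. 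Reducing mod $p$ forces $v=pu_{j_1}-u_i$ for some $u_{j_1}\in\Z^n$; combining $v\in(p-1)\Delta$ with $u_i\in\mathring{\Delta}$ gives $\langle u_{j_1},v_k\rangle\ge-k_k+\tfrac{1}{p}$, which together with integrality forces $u_{j_1}\in\mathring{\Delta}\cap\Z^n$. Then $w=p^{s-1}u_j-u_{j_1}$ is forced, and the convolution collapses to $\sum_{j_1}(\alpha_1)_{ij_1}\sigma((\alpha_{s-1})_{j_1j})$. The recursion $\alpha_s\equiv\alpha_1\cdot\sigma(\alpha_{s-1})\pmod{p}$ iterates to (1).

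For (2), I lift the argument mod $p^s$ in Dwork's style. Write $U=f^p/\sigma(f)(t^p)=1+pV$ with $V$ a formal Laurent series, so that $R_s:=f^{p^s-1}/\sigma(f^{p^{s-1}-1})(t^p)=f^{p-1}U^{p^{s-1}-1}$ and $R_{s+1}/R_s=U^{p^{s-1}(p-1)}$. The Kummer estimate $v_p\bigl(\binom{p^{s-1}(p-1)}{k}\bigr)\ge s-1-v_p(k)$ yields $v_p\bigl(\binom{p^{s-1}(p-1)}{k}p^k\bigr)\ge s$ for $k\ge 1$; hence $(1+pV)^{p^{s-1}(p-1)}\equiv 1\pmod{p^s}$, equivalently the polynomial identity $f^{p^{s+1}-1}\sigma(f^{p^{s-1}-1})(t^p)\equiv f^{p^s-1}\sigma(f^{p^s-1})(t^p)\pmod{p^s}$. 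Introduce the auxiliary matrix $B_s=\bigl([t^{pu_{i'}-u_i}]R_s\bigr)_{i,i'}$, so that the series congruence $R_{s+1}\equiv R_s\pmod{p^s}$ becomes $B_{s+1}\equiv B_s\pmod{p^s}$. The crucial matrix identity is $\alpha_{s+1}\equiv B_{s+1}\cdot\sigma(\alpha_s)\pmod{p^s}$: a polytope argument as in (1) shows that the only convolution pairs $(v,w)$ with $v+pw=p^{s+1}u_j-u_i$ that contribute to the coefficient of $t^{p^{s+1}u_j-u_i}$ in $R_{s+1}\sigma(f^{p^s-1})(t^p)$ come from $w=p^su_j-u_{i'}$, $v=pu_{i'}-u_i$, $u_{i'}\in\mathring{\Delta}\cap\Z^n$. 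Invertibility of $\sigma(\alpha_s)$ in $R_\infty$, inherited from $\alpha_1$ via (1), then gives $\alpha_{s+1}\sigma(\alpha_s)^{-1}=B_{s+1}\equiv B_s=\alpha_s\sigma(\alpha_{s-1})^{-1}\pmod{p^s}$. This matrix extraction is the main technical obstacle, since $R_s$ is a general formal Laurent series and one must carefully justify that no other convolution pairs contribute mod $p^s$.

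For (3), I apply $D\sigma^m$ to the congruence $\phi_s\equiv\phi_{s-1}\pmod{p^s}$ from (2), where $\phi_s:=\alpha_{s+1}\sigma(\alpha_s)^{-1}$. Since $\sigma(x)\equiv x^p\pmod{p}$ implies $D(\sigma(x))\equiv 0\pmod{p}$, iteration gives $D\sigma^m(R)\subseteq p^mR$; applying $D$ to $\sigma^m(\phi_s)-\sigma^m(\phi_{s-1})\in p^sR$ therefore yields $D(\sigma^m(\phi_s))\equiv D(\sigma^m(\phi_{s-1}))\pmod{p^{s+m}}$. Passing to logarithmic derivatives $L(X):=D(X)X^{-1}$, the product rule gives the expansion $L(\sigma^m(\alpha_{s+1}))=L(\sigma^m(\phi_s))+\sigma^m(\phi_s)L(\sigma^{m+1}(\alpha_s))\sigma^m(\phi_s)^{-1}$; subtracting the analogous expansion for $\sigma^m(\alpha_s)$ reduces (3) at $(s,m)$ to the comparison $L(\sigma^m(\phi_s))\equiv L(\sigma^m(\phi_{s-1}))\pmod{p^{s+m}}$ (immediate from the $D\sigma^m$-estimate) together with the same congruence at $(s-1,m+1)$; induction on $s$ closes the argument.
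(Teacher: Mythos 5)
You should first be aware that the paper does not prove this statement at all: it is quoted as Theorem~1 of \cite{vlasenko}, and the only related arguments the authors supply are a geometric re-derivation of the mod~$p$ recursion $\alpha_s\equiv\alpha_1\cdot\sigma(\alpha_{s-1})\bmod p$ via the $s$-iterated Hasse--Witt operator, and weaker versions of (2) and (3) (with modulus $p^{s-n}$) extracted from the crystalline argument of Theorem~\ref{main2}. Measured against the actual content of the theorem, your part (1) is correct and complete: the factorization $f^{p^s-1}=f^{p-1}\cdot(f^{p^{s-1}-1})^p$, the reduction $g^p\equiv\sigma(g)(t^p)\bmod p$, and the observation that $v\in(p-1)\Delta$, $u_i\in\mathring{\Delta}\cap\Z^n$ and $v+u_i\in p\Z^n$ force $(v+u_i)/p\in\mathring{\Delta}\cap\Z^n$ are exactly the right ingredients, and this is more elementary than the paper's geometric alternative. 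Part (3) also reduces correctly to part (2): the estimate $D(\sigma^m(R))\subseteq p^mR$, the identity $L(XY)=L(X)+XL(Y)X^{-1}$, and the induction with base $\alpha_0=I$ all check out.

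The genuine gap is in part (2), precisely at the step you flag. The cross-multiplied congruence $f^{p^{s+1}-1}\sigma(f^{p^{s-1}-1})(t^p)\equiv f^{p^s-1}\sigma(f^{p^s-1})(t^p)\bmod p^s$ is fine (it is the easy half, following from $A^N-B^N\in p^sR[t^{\pm1}]$ with $N=p^{s-1}(p-1)$), but the matrix extraction $\alpha_{s+1}\equiv B_{s+1}\cdot\sigma(\alpha_s)\bmod p^s$ does \emph{not} follow from ``a polytope argument as in (1)''. In (1) the decisive constraint was that the first convolution factor $f^{p-1}$ is supported in the bounded polytope $(p-1)\Delta$, which is what pins $u_{j_1}$ into $\mathring{\Delta}$. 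The factor $R_{s+1}=f^{p^{s+1}-1}/\sigma(f^{p^s-1})(t^p)$ is not a Laurent polynomial: to define it you must localize at a vertex coefficient and expand in a translated cone, so its support is unbounded and its coefficients live in a localization of $R$ rather than in $R_\infty$. In the convolution for the coefficient of $t^{p^{s+1}u_j-u_i}$ the point $u'=p^su_j-w$ then ranges over all of $p^su_j-\bigl((p^s-1)\Delta\cap\Z^n\bigr)$, almost none of which are interior points of $\Delta$, and there is no reason for the stray terms $[t^{pu'-u_i}]R_{s+1}\cdot\sigma([t^{p^su_j-u'}]f^{p^s-1})$ to cancel mod $p^s$. (There is also an off-by-one in your chain: concluding $B_s\equiv\alpha_s\sigma(\alpha_{s-1})^{-1}\bmod p^s$ needs the extraction identity at index $s$ to hold mod $p^s$, whereas your stated lemma at index $s$ only gives mod $p^{s-1}$.) Controlling exactly these non-interior contributions is the real content of Vlasenko's Theorem~1(2) and requires a different mechanism than the support argument of part (1); as it stands your proof of (2), and hence of (3), is incomplete.
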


Suppose that $f$ defines a smooth hypersurface $\pi\colon Y\to S$. We assume $Y$ satisfies the condition (HLF) in \cite{Katz1985}, the Hodge cohomology groups $H^j(Y, \Omega^i_{Y/S})$ are locally free $R$-modules for $i+j=n-1$. We also assume the pair $(X, Y)$ satisfies (HLF), the Hodge cohomology groups $H^j(X, \Omega_{X/S}^i(\log Y))$ are locally free $R$-modules for $i+j=n$. Consider the $F$-crystal $H^{n-1}_{cris}(Y_0/S_\infty)\cong H^{n-1}_{DR}(Y/S)\otimes_R R_\infty$. We further assume the family $Y/S$ satisfy condition $HW(n-1)$ in \cite{Katz1985}, which says for any $s_0\colon R_0\to K$ with $K$ perfect field, the Hasse-Witt operator $H^{n-1}(Y^{(p)}_{s_0},\OX_{Y^{(p)}_{s_0}})\to H^{n-1}(Y_{s_0},\OX_{Y_{s_0}})$ is an automorphism. Notice that $\alpha_1\mod p$ is the Hasse-Witt matrix under the dual basis of $\omega_i=\Res {t^{u_i}dt_1\wedge \cdots \wedge dt_n\over t_1\cdots t_n f(t)}\in H^0(Y, \Omega^{n-1}_{Y/S})$ according to Corollary \ref{toricHW}. The condition $HW(n-1)$ can be checked using $\alpha_1$. In particular, this condition implies that $\alpha_1\mod p$ is invertible. The unit-root $F$-crystal $U_0\subset H^{n-1}_{cris}(Y_0/S_\infty)$ and slope $\leq n-2$ sub-crytal $U_{\leq n-2}$ are defined under the assumption. The quotient $Q_{n-1}=H^{n-1}_{cris}/U_{\leq n-2}$ is isomorphic to the $p^{n-1}$-twist of the dual $U_0^\vee$ to $U_0$. The projection of $\omega_i$ to $Q_{n-1}$ gives a dual basis of $U_0$. In \cite{vlasenko}, the Frobenius matrix and connection matrix of $U_0$ are conjectured to be the limits of matrices in Theorem \ref{conjecture}.
\begin{con}[\cite{vlasenko}]
\label{conv}
The Frobenius matrix is the $p$-adic limit 
\begin{equation}
F=\lim_{s\to \infty} \alpha_{s+1}\sigma(\alpha_s)^{-1}.
\end{equation}
The connection matrix is given by 
\begin{equation}
\nabla_D=\lim_{s\to \infty} D(\alpha_{s})(\alpha_s)^{-1}.
\end{equation}
\end{con}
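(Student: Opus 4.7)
The plan is to apply Katz's method of $p$-adic approximation of the Frobenius on the unit-root sub-crystal from \cite{Katz1985} to our toric family, and then to identify the approximation matrices produced by his formal-chart construction with Vlasenko's matrices $\alpha_s$. The congruences in Theorem \ref{cong} already ensure that the sequences $\alpha_{s+1}\sigma(\alpha_s)^{-1}$ and $D(\alpha_s)\alpha_s^{-1}$ are $p$-adically Cauchy in $R_\infty$, so the real task is to identify their limits with the intrinsic Frobenius and connection matrices of $U_0$.

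First I would set up the geometric framework on the open torus $T\subset X$ using the invariant form $\Omega=dt_1\wedge\cdots\wedge dt_n/(t_1\cdots t_n)$, so that by Corollary \ref{toricHW} the basis $\omega_i=\Res(t^{u_i}\Omega/f)$ of $R^0\pi_*\omega_{\YY/S}$ is indexed by interior lattice points $u_i\in\mathring{\Delta}\cap\Z^n$. The condition $HW(n-1)$, together with Corollary \ref{toricHW}, is exactly the statement that $\alpha_1\bmod p$ is invertible, which is Katz's hypothesis for producing a unit-root sub-crystal $U_0\subset H^{n-1}_{cris}(Y_0/S_\infty)$ and its approximating sequence. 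Katz's construction then delivers, for each $s$, an approximation matrix $C_s$ to the iterated Frobenius $F\cdot\sigma(F)\cdots\sigma^{s-1}(F)$ on $U_0$ in terms of coefficients of a formal expansion of the meromorphic section $\Omega/f$ of $\omega_{X/S}(Y)$.

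The crux of the argument is the identification $C_s\equiv\alpha_s\pmod{p^s}$ on $U_0$. The key tool is the Dwork-style identity
\[
\frac{1}{f(t)}=\frac{f(t)^{p^s-1}}{f(t)^{p^s}},
\]
combined with the Frobenius congruence $f(t)^{p^s}\equiv\sigma^s(f)(t^{p^s})\pmod{p^s}$ valid for Laurent polynomials with $\sigma$-compatible coefficients. Reading off Katz's formal coefficient at $t^{-u_i}$ in the $s$-fold Frobenius-twisted expansion of $1/f$ against the basis $\omega_j$ and applying this identity rewrites it as the coefficient of $t^{p^su_j-u_i}$ in $f^{p^s-1}$, which is precisely $(\alpha_s)_{ij}$. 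With this identification in hand, Katz's convergence theorem for the unit-root Frobenius yields $F=\lim_s\alpha_{s+1}\sigma(\alpha_s)^{-1}$, and his parallel description of the Gauss-Manin connection on $U_0$ as the $p$-adic limit of logarithmic derivatives of the $C_s$ yields $\nabla_D=\lim_s D(\alpha_s)\alpha_s^{-1}$, which is then consistent with Theorem \ref{cong}(3) specialized to $m=0$.

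The main obstacle will be this identification step: aligning Katz's intrinsic Cartier-operator-based approximation from Section \ref{local} with Vlasenko's explicit monomial extraction. This requires careful bookkeeping of (a) how the $\sigma$-linear Hasse-Witt operator $\HW$ iterates $s$-fold through the relative Frobenius $\Fs$ on the pair $(\XX,\YY)$, (b) how the Laurent-series manipulation above lifts from $k$ to $R_\infty$ in a manner compatible with the fixed Frobenius lift $\sigma\colon R\to R$ on coefficients, and (c) that the (HLF) hypothesis for the pair $(X,Y)$ ensures that the formal chart along the LCS-limit section $s_0$ genuinely detects only the $U_0$ component and not the slope-$\leq n-2$ part $U_{\leq n-2}$, so that the truncated expansion coefficients really produce the Frobenius on the unit-root quotient.
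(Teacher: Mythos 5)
Your high-level plan tracks the paper's proof of Theorem~\ref{main2}: expand in a formal chart at a toric fixed point, use the Dwork-style identity $f^{p^s-1}/f^{p^s}=1/f$ together with the congruence $\sigma(f^{p^{s-1}})\equiv f^{p^s}\bmod p^s$, and identify the coefficients of the expanded forms with the entries of $\alpha_s$. However, there is a genuine gap in your step~(c), and you have also omitted a hypothesis that the paper needs.

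The gap: you assert that the (HLF) hypothesis on the pair $(X,Y)$ ensures the formal expansion map ``genuinely detects only the $U_0$ component and not the slope-$\leq n-2$ part.'' That is not what (HLF) says -- (HLF) is a local-freeness statement about the Hodge cohomology groups, while the assertion you need is that $U_{\leq n-1}$ lies in (in fact equals) the kernel of the formal expansion map $P\colon H^n_{DR}(X,Y)\otimes R_\infty\to H^n_{DR}(R_\infty[[t_1,\dots,t_n]]/R_\infty)$. In the paper this is isolated as Conjecture~\ref{key}, which remains unproved; the authors remark that it ``might be proved by a log version of de~Rham--Witt following Katz,'' and then explicitly route around it. The workaround is the divisibility estimate $p^{l(n-1)}P(U_{\leq n-1})\subset p^{ln}H^n_{DR}(R_\infty[[t_1,\dots,t_n]]/R_\infty)$, obtained by combining the fact that Frobenius on $U_{\leq n-1}$ is divisible by $p^{n-1}$ (so there is a $\sigma^{-1}$-linear $\tilde F$ with $F\tilde F=p^{n-1}$) with the fact that Frobenius on formal de~Rham cohomology is divisible by $p^n$; multiplying the would-be congruences by $p^{l(n-1)}$, reducing mod $p^{ln}$, and letting $l\to\infty$ recovers the Frobenius and connection matrices on the subsequence $\alpha_{ns}$. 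Without this estimate, your identification $C_s\equiv\alpha_s\bmod p^s$ is not justified: the expansion coefficients of $F(\omega_i^{(\sigma)})$ a priori mix in contributions from $U_{\leq n-1}$, and nothing you have invoked kills them.

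Separately, your proof never uses, or even states, the hypothesis of Theorem~\ref{main2} that a generic section of $L$ does not vanish at some toric fixed point $p_i$. This is what provides the section $p\colon S\to X$ away from $Y$ (equivalently, a vertex $u_0$ of $\Delta$ whose coefficient $a_{u_0}$ is invertible) at which the formal expansion of $1/f$ makes sense in $R[[t_1,\dots,t_n]][t_1^{-1},\dots,t_n^{-1}]$. You also write that the expansion is taken ``along the LCS-limit section $s_0$,'' but $s_0$ is a point in the parameter space $V^\vee$, not a section $S\to X$ of the ambient family; the expansion point in the paper is the toric fixed point $(t_1,\dots,t_n)=0$. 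Finally, your claim $C_s\equiv\alpha_s\bmod p^s$ is a stronger intermediate statement than what the paper actually proves; the paper establishes directly the relations $p^n\sigma(\alpha_{s-1})\equiv(f_{ij})\alpha_s\bmod p^s$ and $D(\alpha_s)\equiv(\nabla(D)_{ij})\alpha_s\bmod p^s$ (first under Conjecture~\ref{key}, then unconditionally on the subsequence $s=nl$ via the divisibility trick), and deduces the limits from Vlasenko's Theorem~\ref{cong}.
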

Now we give the proof of this conjecture under an additional assumption on $(X,L)$.
\begin{thm}
\label{main2}
Let $(X, L)$ be a smooth toric variety with line bundle $L=\OX(\sum k_i D_i)$. Let $p_i$ be toric invariant points corresponding to top dimensional cone in the fan decomposition. If a generic section of $L$ does not vanish at some $p_i$, then Conjecture $\ref{conv}$ is true.
\end{thm}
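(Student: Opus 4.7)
The plan is to identify Vlasenko's matrices $\alpha_s$ with the matrices produced by Katz's $p$-adic approximation algorithm in \cite{Katz1985} for the Frobenius and Gauss-Manin connection on the unit-root subcrystal $U_0$. Katz's algorithm requires a section $S \to X \setminus Y$ along whose formal neighborhood one expands the relevant top forms; the hypothesis that a generic $f$ does not vanish at some torus-invariant point $p_0$ supplies exactly such a section, after restricting $S$ to the open locus where $f(p_0) \neq 0$.

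First, I would set up convenient coordinates. After an $SL(n,\Z)$-action on the character lattice, I may assume $p_0$ corresponds to the top-dimensional cone generated by $v_1 = e_1, \ldots, v_n = e_n$, so that the torus coordinates $(t_1, \ldots, t_n)$ are affine coordinates on a chart around $p_0 = (0, \ldots, 0)$. Working with the basis $\omega_i = \Res\left(\frac{t^{u_i}\, dt_1 \wedge \cdots \wedge dt_n}{t_1 \cdots t_n\, f}\right)$ of $R^0\pi_*\omega_{\YY/S}$ for $u_i \in \mathring{\Delta} \cap \Z^n$, and the meromorphic trivialization $s = s_0/\theta$ of $L$ used in the proof of Corollary \ref{toricHW}, Katz's $s$-th approximation matrix $K_s$ has $(i,j)$-entry equal to a specific coefficient extracted from the formal expansion of $f^{p^s - 1}$ at $p_0$.

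Next I would carry out the core identification $K_s = \alpha_s$ for all $s$. The $s = 1$ case is precisely Corollary \ref{toricHW}. For general $s$, the same calculation with $p$ replaced by $p^s$ and $\tau$ replaced by its $p^s$-analogue (extracting the $(t_1 \cdots t_n)^{p^s - 1}$-shifted coefficient of a Laurent series) shows that Katz's extracted coefficient is the coefficient of $t^{p^s u_j - u_i}$ in $f^{p^s - 1}$, which is exactly Vlasenko's $(\alpha_s)_{ij}$. Condition $HW(n-1)$ ensures $\alpha_1 \bmod p$ is invertible (being the Hasse-Witt matrix), so the congruences of Theorem \ref{cong} guarantee that $\lim_s \alpha_{s+1}\sigma(\alpha_s)^{-1}$ and $\lim_s D(\alpha_s)\alpha_s^{-1}$ converge $p$-adically. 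By Katz's theorem these limits equal the Frobenius and connection matrices of $U_0$ in the dual basis of the $\omega_i$, which is the content of Conjecture \ref{conv}.

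The main obstacle is this step-$s$ identification $K_s = \alpha_s$. While the calculation of Corollary \ref{toricHW} handles $s = 1$ directly, extending it to arbitrary $s$ requires careful bookkeeping of how Katz's iterated-Frobenius-pullback construction at the section $p_0$ interacts with the torus Laurent expansion and with the $\sigma$-twists of the coefficients $a_I \in R$. In particular, one must verify that the ``formal expansion at $p_0$ of the $s$-th Frobenius-pulled-back form'' appearing in \cite{Katz1985} coincides, term by term, with the torus-invariant coefficient extraction from $f^{p^s - 1}$. Once this matching is established the remainder of the proof is essentially formal: the congruences of Theorem \ref{cong} supply the $p$-adic convergence, and Katz's theorem identifies the limits with the Frobenius and connection matrices of the unit-root subcrystal.
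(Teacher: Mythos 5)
Your overall strategy is the same as the paper's (expand at the torus-fixed point $p_0$ supplied by the hypothesis, match the extracted coefficients with Vlasenko's $\alpha_s$, and use the congruences of Theorem \ref{cong} for convergence), but there is a genuine gap at the step where you write ``by Katz's theorem these limits equal the Frobenius and connection matrices of $U_0$.'' Katz's Theorem 6.2 in \cite{Katz1985} does not apply off the shelf here: the expansion point $p_0$ lies in $X\setminus Y$, not on the family $Y/S$, so the relevant crystal is the logarithmic one $H^n_{cris}(X_0,Y_0)$, and the statement you need --- that the slope $\leq n-1$ subcrystal $U_{\leq n-1}$ is exactly the kernel of the formal expansion map $P\colon H^n_{DR}(X,Y)\otimes R_\infty\to H^n_{DR}(R_\infty[[t_1,\dots,t_n]]/R_\infty)$ --- is precisely Conjecture \ref{key}, which is open. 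The paper circumvents it with a quantitative substitute: since Frobenius divides $p^{n-1}$ on $U_{\leq n-1}$ but every class in the formal-disk cohomology is divisible by $p^n$ under Frobenius, one gets $p^{l(n-1)}P(U_{\leq n-1})\subset p^{ln}H^n_{DR}(R_\infty[[t_1,\dots,t_n]]/R_\infty)$, and then extracts the limit along the subsequence $s=nl$. Without this (or a proof of Conjecture \ref{key}) your argument does not identify the $p$-adic limit with the Frobenius matrix.

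A second, smaller gap is that the identification ``$K_s=\alpha_s$'' is not just bookkeeping: the congruence $p^n\sigma(\alpha_{s-1})\equiv (f_{ij})\,\alpha_s \bmod p^s$ linking Vlasenko's matrices to the Frobenius entries $f_{ij}$ is derived in the paper by writing $f^{p^s-1}=f^{p^s}\cdot f^{-1}$, using $\sigma(f^{p^{s-1}})\equiv f^{p^s}\bmod p^s$, and controlling the error with the estimate $\nu_p\binom{p^s}{k_1,\dots,k_l}\geq s-\min_i\nu_p(k_i)$ applied to the expansion coefficients of $f^{-1}$ at $p_0$ (this is where the hypothesis that $u_0=(-k_1,\dots,-k_n)$ is a vertex of $\Delta$ enters, guaranteeing $f^{-1}\in R[[t]][t^{-1}]$ after inverting $a_{u_0}$). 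You should also note that restricting to the locus $f(p_0)\neq 0$ must be undone at the end; the paper does this by treating the $a_I$ as formal variables and using that Vlasenko's limits exist unconditionally, so the identity extends from the open locus by density.
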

The assumption in the theorem can be checked from toric data, or replaced by the equivalent assumption on the polytope of $|L|$. Let $p_i$ be the intersection of $D_1\cdots D_n$. Under a transformation of $SL(n,\mathbb{Z})$, we can assume the corresponding cone is generated by standard basis of $\mathbb{R}^n$. Let $f=\sum_I a_It^{u_I}$ as before. Then under a trivialization of $L$, the universal section $f$ is $t_1^{k_1}\cdots t_n^{k_n}(\sum_I a_It^{u_I})$. So a generic section $f$ does not vanish at $(t_1, \cdots, t_n)=(0,\cdots,0)$ means $(-k_1,\cdots, -k_n)$ is a vertex of $\Delta$. The assumption the theorem is equivalent to that at least one of the vertices of $\Delta$ is the intersection of hyperplanes $\langle v, v_i\rangle= -k_i, 1\leq i\leq n$ with $v_i\cdots v_n$ generating a cone of $X$. This is satisfied by $X=\PP^n$ with $L=\OX(d), d\geq n+1$.
\begin{proof}
The proof follows the ideas in Katz's proof of Theorem 6.2 \cite{Katz1985}. Consider the $F$-crystal constructed by logarithmic crystalline cohomology $H^{n}_{cris}(X_0, Y_0)\cong H^n_{DR}(X, Y)\otimes R_\infty$. From the long exact sequence
\begin{equation}
\cdots \to H^n_{DR}(X)\to H^n_{DR}(X, Y)\to H^{n-1}_{DR}(Y)(-1)\to \cdots 
\end{equation}
and $H^k_{DR}(X)$ is concentrated in $H^{{k\over2}, {k\over2}}$, the corresponding subcrystal $U_{\leq n-1}$ and quotient $Q_{n}$ are also defined on $H^{n}_{cris}(X_0, Y_0)$ by taking the inverse image of $U_{\leq n-2}$ subcrystal in $H^{n-1}_{cris}(Y_0)$ and $$Q_n(H^{n}_{cris}(X_0, Y_0))\cong Q_{n-1}(H^{n-1}_{cris}(Y_0))(-1).$$
Here $H(-1)$ means the Frobenius action is multiplied by $p$. We also have isomorphism $H^{n}_{cris}(X_0, Y_0)=(H^0(X, \Omega^n_{X/S}(Y))\otimes R_\infty)\oplus U_{n-1}$. So we only need to consider the Frobenius matrix acting on projections of log $n$-forms $\omega_i= {t^{u_i}dt_1\wedge \cdots \wedge dt_n\over t_1\cdots t_n f(t)}$ onto $Q_n$.
We can assume the primitive vectors $v_1,\cdots v_n$ are the standard basis of $\mathbb{R}^n$. The cone generated by $v_1\cdots v_n$ defines an affine coordinate $(t_1, \cdots, t_n)$ on $X$ which is isomorphic to $\mathbb{A}^n$. First we assume $Y$ is away from $(t_1, \cdots, t_n)=0$ and consider the formal expansion map along $(t_1, \cdots, t_n)=0$
\begin{equation}
P\colon H^n_{DR}(X, Y)\otimes R_\infty\to H^n_{DR}(R_\infty[[t_1,\cdots, t_n]]/R_\infty).
\end{equation}
Similar as Katz's proof of Theorem 6.2 \cite{Katz1985}, we have the following conjecture
\begin{con}
\label{key}
$U_{\leq n-1}$ is the kernel of formal expansion map. 
\end{con}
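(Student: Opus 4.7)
The plan is to mirror Katz's argument in the proof of Theorem 6.2 of \cite{Katz1985}, now in the logarithmic toric setting, and to establish the two inclusions $\ker P \subseteq U_{\leq n-1}$ and $U_{\leq n-1} \subseteq \ker P$ separately.

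First, I would set up the $F$-equivariance of $P$. Choose a Frobenius lift on the formal neighborhood of the toric invariant point at $(t_1,\ldots,t_n) = 0$: the natural candidate is $t_j \mapsto t_j^p$ combined with $\sigma$ on $R_\infty$. This lift induces an $F$-crystal structure on the target of $P$ compatible with the crystalline Frobenius on $H^n_{cris}(X_0, Y_0)$, so that $P$ becomes $F$-equivariant and $\ker P$ is $F$-stable. The key local calculation is that under $t_j \mapsto t_j^p$ the pullback of $\frac{dt_1 \wedge \cdots \wedge dt_n}{t_1 \cdots t_n}$ equals $p^n$ times itself, so Frobenius acts on the top log-residue coefficient by $p^n \sigma$. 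Consequently the target of $P$ is pure of slope $n$, and $\ker P$ must have slopes $\leq n-1$; by maximality of $U_{\leq n-1}$ this gives $\ker P \subseteq U_{\leq n-1}$.

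For the reverse inclusion I would perform a rank count using the assumption of Theorem \ref{main2}. After an $SL(n, \Z)$ change we may assume that $v_1, \ldots, v_n$ are the standard basis and that the vertex $(-k_1, \ldots, -k_n)$ of $\Delta$ lies at the corner corresponding to the chosen invariant point; this forces $f$ to have a nonvanishing constant term in $(t_1, \ldots, t_n)$, so $1/f \in R_\infty[[t_1, \ldots, t_n]]$. The log $n$-forms $\omega_i = \frac{t^{u_i}\, dt_1 \wedge \cdots \wedge dt_n}{t_1 \cdots t_n f(t)}$ for $u_i \in \mathring{\Delta} \cap \Z^n$ then expand as formal series with distinct leading monomials $t^{u_i - \mathbf{1}}$. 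Thus their images under $P$ are $R_\infty$-linearly independent, so $P$ is injective on the $R_\infty$-span of $\{\omega_i\}$. Since $\{\omega_i\}$ projects to a basis of $Q_n$, the Hodge-type splitting $H^n_{cris}(X_0, Y_0) \cong (R_\infty\text{-span of } \omega_i) \oplus U_{\leq n-1}$ combined with injectivity of $P$ on the first summand gives $\ker P \cap (\text{span } \omega_i) = 0$, hence $\mathrm{rk}_{R_\infty} \ker P \leq \mathrm{rk}_{R_\infty} U_{\leq n-1}$, yielding equality with the first inclusion.

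The main obstacle is the first step: verifying the $F$-equivariance of $P$ and identifying the target as an $F$-crystal of pure slope $n$ in the logarithmic toric setting. The subtle point is interpreting $H^n_{DR}(R_\infty[[t_1,\ldots,t_n]]/R_\infty)$ so that it faithfully records the top log-residue coefficients picked off by formal expansion; once this is done correctly, the slope computation reduces to the fact that $t_j \mapsto t_j^p$ scales top log forms by $p^n$. The toric assumption on the vertex of $\Delta$ in Theorem \ref{main2} is precisely what ensures standard local coordinates, so that Katz's projective-space argument carries over essentially verbatim.
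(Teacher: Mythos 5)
Note first that the paper explicitly states this as a \emph{conjecture}, not a theorem: the authors remark that it ``might be proved by log version of the theory of de Rham--Witt following Katz's proof,'' and the proof of Theorem~\ref{main2} deliberately routes around it via a weaker $p$-divisibility statement. So there is no proof in the paper to compare against; the question is whether your argument actually closes the gap.

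It does not, and the logic of your two steps is misaligned. Your second step (injectivity of $P$ on the span of the $\omega_i$, via the splitting $H^n_{cris}(X_0,Y_0)\cong(\operatorname{span}\omega_i)\oplus U_{\leq n-1}$) yields $\ker P\cap\operatorname{span}\omega_i=0$, hence the projection of $\ker P$ to $U_{\leq n-1}$ along the splitting is injective and $\operatorname{rk}\ker P\leq\operatorname{rk}U_{\leq n-1}$. This is the \emph{same} direction as the first inclusion you assert, so the two facts are mutually consistent but do not force equality. Your rank-count step is precisely the content of Remark~\ref{kernel}: \emph{if} one already knows $U_{\leq n-1}\subseteq\ker P$, then injectivity of $P$ on the Hodge piece promotes the inclusion to equality. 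The direction you have not actually addressed, $U_{\leq n-1}\subseteq\ker P$, is the hard one. (If the slope/Frobenius heuristic were correctly run, it would point toward $U_{\leq n-1}\subseteq\ker P$, since a quotient of the slope-$\leq n-1$ piece cannot land injectively in a target on which Frobenius is divisible by $p^n$; your stated conclusion $\ker P\subseteq U_{\leq n-1}$ has the logic backwards.)

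More substantively, making that Frobenius argument rigorous is exactly what is open. The target $H^n_{DR}(R_\infty[[t_1,\ldots,t_n]]/R_\infty)$ is not a finitely generated $R_\infty$-module, so ``pure of slope $n$'' has no off-the-shelf meaning, and Newton-filtration theory over a non-perfect base adds further subtleties. The honest version of the argument, which the paper actually carries out in the passage beginning ``Now we state the proof without assuming Conjecture~\ref{key},'' uses the quasi-inverse $\tilde F$ with $F\tilde F=p^{n-1}$ on $U_{\leq n-1}$ together with $p^n$-divisibility of Frobenius on the target to conclude $p^{l(n-1)}P(U_{\leq n-1})\subseteq p^{ln}H^n_{DR}(R_\infty[[t_1,\ldots,t_n]]/R_\infty)$ for all $l$. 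This shows $P(U_{\leq n-1})$ is infinitely $p$-divisible but does \emph{not} show it vanishes, because the target is not known to be $p$-adically separated. Supplying that separatedness---for instance through the log de Rham--Witt comparison the authors invoke---is precisely the missing ingredient, and your proposal does not provide it.
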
Actually a weaker statement that $U_{\leq n-1}$ is contained in the the kernel can imply that $U_{\leq n-1}$ is the kernel, see remark \ref{kernel}. The conjecture might be proved by log version of the theory of {de Rham}-Witt following Katz's proof. We will first give the proof of Theorem \ref{main2} assuming the conjecture and state the method to get around the conjecture at the end. Assume the local expansion of $1\over f$ exist in $R[[t_1,\cdots, t_n]][{1\over t_1}, \cdots, {1\over t_n}]$ and has the form
\begin{equation}
{1\over f}=\sum_u A_{u}t^u.
\end{equation}
Notice that $f$ may not have an inverse in $R[[t_1,\cdots, t_n]][{1\over t_1}, \cdots, {1\over t_n}]$. We can consider the localization of $R$ by inverting the coefficient $a_{u_0}$ of the vertex $u_0=(-k_1,\cdots, -k_n)$. Let $a_{u_0}=1$, then
\begin{equation}
{1\over f}={t^{-u_0}\over 1+\sum_{u_I\neq u_0} a_It^{u_I-u_0}}=t^{-u_0}(1+\sum_k(-1)^k(\sum_{u_I\neq u_0} a_It^{u_I-u_0})^k)=\sum_u A_{u}t^u.
\end{equation}
So the local expansion of $\omega_i$ has the form
\begin{equation}
\omega_i= {t^{u_i}dt_1\wedge \cdots \wedge dt_n\over t_1\cdots t_n f(t)}={dt_1\wedge \cdots \wedge dt_n\over t_1\cdots t_n}\cdot \sum_u A_ut^{u+u_i}
\end{equation}
with all $u+u_i>0$. Assume the Frobenius action on $\omega_i$ is in the form 
\begin{equation}
F(\omega_i^{(\sigma)})\equiv \sum_j f_{ij} \omega_j \mod U_{\leq n-1}
\end{equation}
and the connection of $\nabla_D$ has the form
\begin{equation}
\nabla(D)(\omega_i)\equiv \sum_j {\nabla(D)}_{ij} \omega_j \mod U_{\leq n-1}
\end{equation}
Assume Conjecture \ref{key} is true, then
\begin{equation}
\label{image1}
F(\omega_i^{(\sigma)})=\sum_j f_{ij} \omega_j  \text{  in  } H^n_{DR}(R_\infty[[t_1,\cdots, t_n]]/R_\infty)
\end{equation}
and 
\begin{equation}
\label{image2}
\nabla(D)(\omega_i)= \sum_j {\nabla(D)}_{ij} \omega_j \text{  in  } H^n_{DR}(R_\infty[[t_1\cdots t_n]]/R_\infty).
\end{equation}
According to the Frobenius action on $H^n_{DR}(R_\infty[[t_1,\cdots, t_n]]/R_\infty)$, we compare the coefficient of $t^{p^kv}$ for multi-index $v\in \mathbb{Z}^n$ and $v>0$
\begin{equation}
\label{congruence1}
p^n\sigma({A_{{u}_i^\prime}})\equiv  \sum_j f_{ij} A_{u_j^{\prime\prime}}\mod p^k
\end{equation}
with $p({{u}_i^\prime}+u_i)=p^kv=u_j^{\prime\prime}+u_j$. 
On the other hand, we compare the expansions of $f^{p^s-1}=\sum_u \tilde{A}^s_u t^u$ and $1\over f$
\begin{equation}
\sum_u \tilde{A}^s_u t^u=f^{p^s}{1\over f}= (\sum B^s_ut^u)(\sum_u A_ut^u).
\end{equation}
So $\tilde{A}^s_u=  \sum_{u^\prime+u^{\prime\prime}=u} B^s_{u^\prime}A_{u^{\prime\prime}}$. We can extend $\sigma$ to any Laurent series with coefficients in $R$ by $\sigma(t^u)=t^{pu}$. Since $\sigma(f)=f^p+pg$, then
\begin{equation}
\sigma(f^{p^{s-1}})=\sigma(f)^{p^{s-1}}=(f^p+pg)^{p^{s-1}}\equiv f^{p^s}\mod p^s.
\end{equation}
Let $f^{p^{s-1}-1}=\sum_u \tilde{A}^{s-1}_u t^u$, then
\begin{equation}
\sum_u \sigma(\tilde{A}^{s-1}_u) t^{pu}\equiv f^{p^s}\sigma({1\over f})= (\sum_{u}B^s_ut^u)(\sum_u \sigma (A_u)t^{pu})\mod p^s.
\end{equation}
So $\sigma(\tilde{A}^{s-1}_u)\equiv \sum_{u^\prime+pu^{\prime\prime}=pu} B^s_{u^\prime}\sigma(A_{u^{\prime\prime}})\mod p^s$. Now we compute $\sigma(\alpha_{s-1})_{im}=\sigma(\tilde{A}^{s-1}_{p^{s-1}u_m-u_i})$ in terms of $A_u$ and $B_u$. It is the sum of $B^s_{u^\prime}\sigma(A_{u^{\prime\prime}})$ with $u^\prime+pu^{\prime\prime}=p({p^{s-1}u_m-u_i})$. The factor $B^s_{u^\prime}$ is the sum of the terms
\begin{equation}
{p^s \choose k_1,k_2,\cdots, k_l}a_{I_1}^{k_1}\cdots a_{I_l}^{k_l}
\end{equation}
with $k_1u_{I_1}+\cdots+k_lu_{I_l}=u^\prime$. Denote $\nu_p$ to be the $p$-adic valuation. Let $k=\min\{\nu_p(k_1)\cdots \nu_p(k_l)\}$ and $p^kv=p^su_m-u^\prime=p(u^{\prime\prime}+u_i)$ in (\ref{congruence1}), then
\begin{equation}
\sigma(A_{u^{\prime\prime}})\equiv \sum_jf_{ij}A_{u^{\prime\prime}_j}\mod p^k
\end{equation}
with $u^\prime+u^{\prime\prime}_j=p^su_m-u_j$. Since the $p$-adic valuation of multinomial has estimate
\begin{equation}
\nu_p {p^s \choose k_1,k_2,\cdots, k_l}\geq s-\min\{\nu_p(k_1)\cdots \nu_p(k_l)\},
\end{equation}
then 
\begin{equation}
{p^s \choose k_1,k_2,\cdots, k_l}a_{I_1}^{k_1}\cdots a_{I_l}^{k_l}\sigma(A_{u^{\prime\prime}})\equiv \sum_jf_{ij}{p^s \choose k_1,k_2,\cdots, k_l}a_{I_1}^{k_1}\cdots a_{I_l}^{k_l}A_{u^{\prime\prime}_j}\mod p^s.
\end{equation}
So we have 
\begin{equation}
B_{u^\prime}\sigma(A_{u^{\prime\prime}})\equiv \sum_jf_{ij}B_{u^\prime}A_{u^{\prime\prime}_j}\mod p^s
\end{equation}
with $u^\prime+u^{\prime\prime}=p^{s-1}u_m-u_i$ and $u^\prime+u^{\prime\prime}_j=p^su_m-u_j$. Summing all such terms implies
\begin{equation}
p^n\sigma(\alpha_{s-1})\equiv (f_{ij})\alpha_s\mod p^{s}
\end{equation}
and $p^n(f_{ij})^{-1}\equiv \alpha_s\sigma(\alpha_{s-1})^{-1}\mod p^{s-n}$.

Similar calculation as \cite{Katz1985} and congruence relation $D(B_{u^\prime})\equiv 0\mod p^s$ imply 
\begin{equation}
D(\alpha_s)\equiv (\nabla(D)_{ij})\alpha_s\mod p^s.
\end{equation}

If the coefficient of the vertex $u_0$ is zero, we regard $a_I$ as formal variables and the universal hypersurface family. Then we can prove the result on an open subset of $S$ and the $p$-adic limit formulas holds on the open subset. Since Vlasenko proved the congruence relations in Theorem \ref{cong} without any constraints on the coefficients, the $p$-adic limits always exist. So the limits coincide with Frobenius matrices and connection matrices because they are equal restricted to an open subset of $S$. 

Now we state the proof without assuming Conjecture \ref{key}. We claim $p^{l(n-1)}P(U_{\leq n-1})\subset p^{ln}H^n_{DR}(R_\infty[[t_1,\cdots, t_n]]/R_\infty)$. Applying Katz's argument of extension of scalars, we only need to prove this when $R$ is the Witt vectors of a perfect field. The Frobenius action on $U_{\leq n-1}$ divides $p^{n-1}$. So there exists $\sigma^{-1}$-linear map $\tilde{F}$ on $U_{\leq n-1}$ such that $\tilde{F}F=F\tilde{F}=p^{n-1}$. On the other hand, the Frobenius action on each element in $H^n_{DR}(R_\infty[[t_1\cdots t_n]]/R_\infty)$ has a factor $p^n$. So $p^{n-1}P(U_{\leq n-1})=P(p^{n-1}U_{\leq n-1})=P(F\tilde{F}U_{\leq n-1})\subset p^nH^n_{DR}(R_\infty[[t_1,\cdots, t_n]]/R_\infty)$ and $l$ iterations give $p^{l(n-1)}P(U_{\leq n-1})\subset p^{ln}H^n_{DR}(R_\infty[[t_1,\cdots, t_n]]/R_\infty)$. Multiplying both (\ref{image1}) and (\ref{image2}) by $p^{l(n-1)}$, we get
\begin{equation}
p^{l(n-1)} F(\omega_i^{(\sigma)})=p^{l(n-1)}\sum_j f_{ij} \omega_j \mod  p^{ln}\text{  in  } H^n_{DR}(R_\infty[[t_1,\cdots, t_n]]/R_\infty)
\end{equation}
and 
\begin{equation}
p^{l(n-1)} \nabla(D)(\omega_i)= p^{l(n-1)}\sum_j {\nabla(D)}_{ij} \omega_j \mod p^{ln} \text{  in  } H^n_{DR}(R_\infty[[t_1\cdots t_n]]/R_\infty).
\end{equation}
Let $s=nl$, similar congruence relation as (\ref{congruence1}) still holds for $k\leq s$
\begin{equation}
p^{n+l(n-1)}\sigma({A_{{u}_i^\prime}})\equiv  p^{l(n-1)}\sum_j f_{ij} A_{u_j^{\prime\prime}}\mod p^k
\end{equation}
with $p({{u}_i^\prime}+u_i)=p^kv=u_j^{\prime\prime}+u_j$ and $v>0$. The same argument shows
\begin{equation}
p^{n+l(n-1)}\sigma(\alpha_{s-1})\equiv p^{l(n-1)}(f_{ij})\alpha_s\mod p^{s}
\end{equation}
and 
\begin{equation}
p^{l(n-1)}D(\alpha_s)\equiv p^{l(n-1)}(\nabla(D)_{ij})\alpha_s\mod p^s.
\end{equation}
Dividing both sides by $p^{l(n-1)}$ and letting $l\to \infty$, we see that the subsquence $\alpha_s\sigma(\alpha_{s-1})^{-1}$ and $D(\alpha_{s})(\alpha_s)^{-1}$ converges to the Frobenius matrix and connection matrix. 
\end{proof}
\begin{rem}
\label{kernel}
If $U_{\leq n-1}$ is contained in the the kernel of formal expansion map, then it is exactly the kernel. We only need to show the restriction of expansion map on $H^0(X, \Omega^n_{X/S}(Y))\otimes R_\infty\to H^n_{DR}(R_\infty[[t_1\cdots t_n]]/R_\infty)$ is injective. This can be proved by similar argument in the proof and invertibility of $\alpha_s$. 
\end{rem}
\begin{rem}
The proof also gives a weaker version of the second and third congruence relations in Vlasenko's Theorem \ref{cong}. The first congruence relation $\alpha_s\equiv \alpha_1\cdot \sigma(\alpha_{s-1})\mod p$ can also be proved geometrically using the argument in Theorem \ref{localHW} and Corollary \ref{toricHW}. We can consider the $s$-iterated Hasse-Witt operation $H^{n-1}(Y_0^{(p^s)},\OX_{Y_0^{(p^s)}})\to H^{n-1}(Y_0, \OX_{Y_0})$. Using similar commutative diagram \ref{exactsequence} with the first vertical map $L^{-1}\to L^{-1}$ being replaced by the composition $L^{-1}\to L^{-p^s}\to L^{-1}$ with $\xi\mapsto \xi^{p^s}\cdot f^{p^s-1}$, we can see the matrix for $s$-iterated Hasse-Witt operation is given by $\alpha_s\mod p$. Hence $\alpha_s\equiv \alpha_1\cdot \sigma(\alpha_{s-1})\mod p$.
\end{rem}

\subsection{Unit root of toric Calabi-Yau hypersurfaces and periods}
\label{CYunitroot}
Now we discuss the relation between unit roots of zeta functions and period integrals for toric Calabi-Yau hypersurfaces. Let $f$ be a Laurent series defining toric Calabi-Yau hypersurfaces. For the sake of simplicity, let $a_0=1$ be the constant term (or the coefficient of interior point of $\Delta$). The unique holomorphic period integral at the special solution-1 point or "large complex structure limit" is $I_\gamma=$ the constant term of the expansion
\begin{equation}
{1\over f} ={1\over 1+\sum_{u_I\neq 0}a_{I}t^{u_I}}=1+\sum_k (-1)^k (\sum_{u_I\neq 0}a_{I}t^{u_I})^k.
\end{equation} 
It can be written as formal power series of $a_I$ with constant term being $1$ and denoted by $P(a_I)$. Then 
\begin{equation}
\alpha_s\equiv  \leftidx{^{(p^s-1)}} (P({a_I}))\mod p^s
\end{equation}
because of the congruence 
\begin{equation}
{p^s-1 \choose k_1,k_2,\cdots, k_l, p^s-1-k}\equiv (-1)^k{k \choose k_1,k_2,\cdots, k_l}\mod p^s.
\end{equation}
Then 
\begin{equation}
\alpha_s\sigma(\alpha_{s-1})^{-1}\equiv  {\leftidx{^{(p^s-1)}} (P({a_I}))\over  \leftidx{^{(p^{s-1}-1)}} (P)(\sigma({a_I}))}\mod p^s
\end{equation}
according to Vlasenko's congruences without any geometric constraints. So the $p$-adic limit $P(a_I)\over P(\sigma(a_I))$ exists in $R_\infty$ and equal to the Frobenius matrix.
We can fix $\sigma(a_I)=a_I^p$. Then the formal power series
\begin{equation}
g(a_I)={P(a_I)\over P(a_I^p)}
\end{equation}
has $p$-adic limit in $\varprojlim_{s\to \infty} \mathbb{Z}_p[a_{I_1}\cdots a_{I_N}, \alpha_1^{-1}]/p^s\mathbb{Z}_p[a_{I_1}\cdots a_{I_N}, \alpha_1^{-1}]$ and it satisfies Dwork congruences
\begin{equation}
g(a_I)\equiv {\leftidx{^{(p^s-1)}} (P({a_I}))\over  \leftidx{^{(p^{s-1}-1)}} (P)(\sigma({a_I}))}\mod p^s.
\end{equation}
Especially it is related to Hasse-Witt matrix by
\begin{equation}
{P(a_I)\over P(a_I^p)}\equiv \leftidx{^{(p-1)}} (P({a_I}))\mod p.
\end{equation}
Let $q=p^r$ and $a_I\in \mathbb{F}_q$ defining a smooth Calabi-Yau variety $Y_0$ over $\mathbb{F}_q$. Assume the Hasse-Witt matrix $\leftidx{^{(p-1)}} (P({a_I}))\mod p$ is not zero. Then there exist exactly one $p$-adic unit root in the factor of zeta function of $Y_0$ corresponding to Frobenius action on $H^{n-1}_{cris}(Y_0)$. It is given by 
\begin{equation}
g(\hat{a}_I)g(\hat{a}_I^p)\cdots g(\hat{a}_I^{p^{r-1}})
\end{equation}
with $\hat{a}_I$ being the Teichm\"uller lifting under $\sigma$. For example, if $a_I$ has lifting as an integer, then $\hat{a}_I=\lim_{s\to \infty}a_I^{p^s}$. 
\begin{rem}
In \cite{vlasenko}, the following result about unit roots is proved. When $S_0=\spec(\mathbb{F}_q)$ and $Y_0$ is a smooth hypersurface in $\PP^n$, let $\Phi=F\cdot \sigma(F)\cdots \sigma^{r-1}(F)$. Then the eigenvalues of $\Phi$ are unit roots of zeta-function of $Y_0$. The conjecture proved above implies that the multiplicities of unit roots are also equal. The proof in \cite{vlasenko} uses Stienstra's result on formal groups \cite{stienstra, stienstra1}. See also \cite{jdyu} for the unit root formula for Dwork family using formal groups following Stienstra. It might be possible to give a proof of the conjecture by this approach.
\end{rem}
\section{Frobenius matrices for Calabi-Yau hypersurfaces}
\label{flagF}
Now we discuss the algorithm of Frobenius matrix of Calabi-Yau hypersurfaces in terms of local expansion. Let $X$ be a smooth Fano variety over $S$ with ample line bundle $L$. Let $f\in H^0(X, L)$ define a smooth hypersurface $Y$ in $X$. Let $\omega_i$ be a basis of $H^0(X, L\otimes K_X^{-1})$. This induces a basis of $H^0(Y, K_Y)$ via adjunction formula. The Hasse-Witt matrix under this basis in terms of local coordinate is given by the algorithm in Section \ref{local}. Fix a section $p\colon S\to X$ on ambient space $X$ such that $Y$ is away from $p$. Let $(t_1, \cdots, t_n)$ be the formal coordinate of $X$ at $p$. The proof of Theorem $\ref{main2}$ depends on $\omega_i$ having the following form in local expansion.  There is a trivialization $\xi$ of $L$ along $p$ such that $w_i\over \xi$ has the form ${t^{u_i}dt_1\wedge \cdots \wedge dt_n\over t_1\cdots t_n}$. The matrix $(\alpha_s)_{ij}$ is defined to be the coefficients of $t^{p^su_j-u_i}$ in $f^{p^s-1}\over \xi^{p^s-1}$. This applies to $L=K_X^{-1}$ with trivialization $\xi=(dt_1\wedge \cdots \wedge dt_n)^{-1}$. So we have the following
\begin{prop}
\label{CYFrob}
Let $f=g(t)(dt_1\wedge \cdots \wedge dt_n)^{-1}$ and $\alpha_s=$ the coefficient of $(t_1\cdots t_n)^{p^s-1}$ in the local expansion of $g^{p^s-1}$. Then similar congruence relations in Vlasenko's theorem (Theorem \ref{cong}) still stand
\begin{enumerate}
\item For $s\geq 1$, $$\alpha_s\equiv \alpha_1\cdot (\alpha_{s-1})^p\mod p.$$
\item Under the condition $HW(n-1)$ and $g(0)\neq 0$, we have
$$ \alpha_{n(s+1)}\cdot \sigma(\alpha_{n(s+1)-1})^{-1}\equiv \alpha_{ns}\cdot \sigma(\alpha_{ns-1})^{-1}\mod p^{s-n}.$$
\item Under the condition of (2), for any derivation $D\colon R\to R$, we have
$$D(\sigma(\alpha_{n(s+1)}))\cdot \sigma(\alpha_{n(s+1)})^{-1}\equiv D(\sigma(\alpha_{ns}))\cdot \sigma(\alpha_{ns})^{-1}\mod p^{s}.$$
\end{enumerate}
The $p$-adic limit $\alpha_{ns}\cdot \sigma(\alpha_{ns-1})^{-1}$ gives the Frobenius action on the unit root part $U_0$ of $H^{n-1}_{cris}(Y_0)$ under the basis induced by residue map.
\end{prop}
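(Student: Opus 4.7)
The plan is to adapt the proof of Theorem \ref{main2} from the toric case to an arbitrary smooth Fano $X$ with $L=K_X^{-1}$. The essential local data are the same: a formal chart $(t_1,\dots,t_n)$ on $X$ at a section $p\colon S\to X$ disjoint from $Y$, together with the trivialization $\xi=(dt_1\wedge\cdots\wedge dt_n)^{-1}$ of $L$, in which $f=g(t)\xi$ and the canonical top form $\omega$ (the basis of $H^0(X,L\otimes K_X)=H^0(X,\OX)$) localizes to $1$. Every manipulation in the proof of Theorem \ref{main2} that was local in nature—meaning, it involved only the Laurent expansion of $1/f$ and the binomial estimate $\nu_p\binom{p^s}{k_1,\dots,k_l}\geq s-\min_i\nu_p(k_i)$—can then be transported verbatim to the present setting, once the appropriate geometric input is in place.

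For part (1), I would argue geometrically, paralleling the final remark of Section \ref{conj}. Consider the $s$-iterated Hasse-Witt operator $H^{n-1}(Y_0^{(p^s)},\OX)\to H^{n-1}(Y_0,\OX)$, obtained from a diagram analogous to (\ref{exactsequence}) with first vertical arrow $L^{-1}\to L^{-1}$ replaced by the composition $L^{-1}\to L^{-p^s}\to L^{-1}$ given on local sections by $\xi\mapsto\xi^{p^s}f^{p^s-1}$. The algorithm of Theorem \ref{localHW}, applied to this modified diagram in the trivialization $\xi$, identifies the matrix of the iterated operator with the coefficient of $(t_1\cdots t_n)^{p^s-1}$ in $g^{p^s-1}$, that is, with $\alpha_s\bmod p$. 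On the other hand this iterated operator factors as the $s$-fold composition of the single Hasse-Witt, whose matrix in characteristic $p$ (where $\sigma=(\cdot)^p$) reads $\alpha_1\cdot\alpha_{s-1}^p\pmod p$, giving part (1).

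For parts (2) and (3), I would rerun the $F$-crystal argument of Theorem \ref{main2} with $H^n_{cris}(X_0,Y_0)$ as the relevant logarithmic crystalline cohomology of the pair. The long exact sequence of $(X_0,Y_0)$, combined with the purity of $H^*_{cris}(X_0)$, produces subcrystals $U_{\leq n-1}\subset H^n_{cris}(X_0,Y_0)$ compatible with $U_{\leq n-2}\subset H^{n-1}_{cris}(Y_0)$, together with a unit-root quotient $Q_n\cong Q_{n-1}(H^{n-1}_{cris}(Y_0))(-1)$. Expanding the log top form $\omega/f=(dt_1\wedge\cdots\wedge dt_n)/g(t)$ along the formal chart at $p$ yields Laurent coefficients $A_u$ equal to those of $1/g$. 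Matching the coefficients of $t^{p^sv}$ in the Frobenius equation $F(\omega^{(\sigma)})\equiv\sum f_{ij}\omega_j\bmod U_{\leq n-1}$ and in the analogous Gauss-Manin equation then reduces everything to the same combinatorial identity used in the toric proof, producing the required congruences for $\alpha_s\sigma(\alpha_{s-1})^{-1}$ and $D(\alpha_s)\alpha_s^{-1}$.

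The main obstacle will be the absence of any analog of Conjecture \ref{key}—the statement that the formal expansion map has kernel exactly $U_{\leq n-1}$. To circumvent this, I would invoke the slope estimate from the end of the proof of Theorem \ref{main2}: Frobenius on $U_{\leq n-1}$ divides $p^{n-1}$ while Frobenius on $H^n_{DR}$ of the formal $n$-disk divides $p^n$, so $l$-fold iteration gives $p^{l(n-1)}P(U_{\leq n-1})\subset p^{ln}H^n_{DR}$. Setting $s=nl$ and dividing by $p^{l(n-1)}$ produces the subsequence $\alpha_{ns}\sigma(\alpha_{ns-1})^{-1}$, which converges to the Frobenius matrix with the weaker rate $p^{s-n}$ exactly as in parts (2) and (3). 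This slope trade-off is the reason the congruences are stated along the subsequence indexed by multiples of $n$ with modulus $p^{s-n}$, rather than attaining the full strength of Vlasenko's Theorem \ref{cong}; proving the analog of Conjecture \ref{key} in the logarithmic setting would recover the stronger form.
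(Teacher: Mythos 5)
Your proposal is correct and follows essentially the same route as the paper: part (1) via the $s$-iterated Hasse--Witt operator built from the modified diagram (\ref{exactsequence}), and parts (2)--(3) by transporting the proof of Theorem \ref{main2} to the formal chart at the section $p$ with $\xi=(dt_1\wedge\cdots\wedge dt_n)^{-1}$, using the slope estimate $p^{l(n-1)}P(U_{\leq n-1})\subset p^{ln}H^n_{DR}$ in place of Conjecture \ref{key}, which is exactly why the congruences run along the subsequence $\alpha_{ns}$ with the weakened modulus. The paper itself gives no separate proof beyond this observation, so nothing is missing from your account.
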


\subsection{Unit root of Calabi-Yau hypersurfaces in $G/P$}
Now we discuss the algorithm for Frobenius matrix of the unit root part of Calabi-Yau hypersurfaces in $X=G/P$. Consider the affine chart $\mathbb{A}^n$ on the Bott-Samelson desingularization of $G/P$ in Section \ref{flagv}. This induces a torus chart $\mathbb{G}_m^n$ on $G/P$. We consider the formal polydisc $R_\infty[[t_1, \cdots, t_n]][{1\over t_1},\cdots, {1\over t_n}]$ instead of $R_\infty [[t_1, \cdots, t_n]]$ in the formal expansion map in the proof of Theorem \ref{main2}. The same method gives the algorithm of the unit root part of the Frobenius action.
\begin{thm}
\label{main3}
Let $f\in H^0(X, K_X^{-1})$ be an anticanonical form defining an smooth Calabi-Yau hypersurface $Y$. Assume $f$ has the form $f=g(t)(dt_1\wedge \cdots \wedge dt_n)^{-1}$ with $g(t)\in R[t_1, \cdots, t_n][{1\over t_1},\cdots, {1\over t_n}]$ in the torus chart as above. Assume the hypersurface $Y$ is away from the image of $(t_1, \cdots, t_n)=0$. Let $\alpha_s$ be the coefficient of $(t_1\cdots t_n)^{p^s-1}$ in the local expansion of $g^{p^s-1}$. The Hasse-Witt matrix is given by $\alpha_1\mod p$. The same congruence relations and Frobenius matrix in proposition \ref{CYFrob} holds.
\end{thm}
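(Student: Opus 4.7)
My plan is to mirror, in the $G/P$ setting, the argument used for Theorem \ref{main2} and Proposition \ref{CYFrob}, replacing the affine chart on the toric variety by the torus chart $(t_1,\ldots,t_n)\in\mathbb{G}_m^n\hookrightarrow G/P$ inherited from the Bott-Samelson resolution $\psi\colon Z_{\lw}\to G/P$ of Section \ref{flagv}. First I would set up the logarithmic $F$-crystal $H^n_{cris}(X_0,Y_0)\cong H^n_{DR}(X,Y)\otimes R_\infty$ together with its unit-root filtration $U_{\leq n-1}$ and quotient $Q_n$. Exactly as in the proof of Theorem \ref{main2}, the long exact sequence relating $H^n_{DR}(X,Y)$ to $H^{n-1}_{DR}(Y)(-1)$, combined with the Hodge-type concentration on $X$, identifies $Q_n$ with a Tate twist of the unit-root quotient $Q_{n-1}$ of $H^{n-1}_{cris}(Y_0)$. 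This reduces the problem to computing the Frobenius action on the image in $Q_n$ of the log anticanonical form $\omega=1/f$.

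Next I would introduce the formal Laurent expansion map
\begin{equation*}
P\colon H^n_{DR}(X,Y)\otimes R_\infty \to H^n_{DR}\bigl(R_\infty[[t_1,\ldots,t_n]][t_1^{-1},\ldots,t_n^{-1}]/R_\infty\bigr),
\end{equation*}
where the Laurent completion (rather than an ordinary power-series completion) appears because the origin of the torus chart lies on the boundary of the open torus cell, not inside it. The hypothesis that $Y$ is away from the closure of this origin ensures, after inverting a suitable monomial coefficient of $g$, that $1/g(t)$ admits a convergent Laurent expansion $\sum_u A_u t^u$; by Lemma \ref{form}, which trivializes $K_X^{-1}$ on the chart via $\psi^*s_0=t_1\cdots t_n(dt_1\wedge\cdots\wedge dt_n)^{-1}$, the form $\omega$ acquires the local expression $\bigl(\sum_u A_u t^u\bigr)\,\tfrac{dt_1\wedge\cdots\wedge dt_n}{t_1\cdots t_n}$ needed to extract coefficients.

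Matching coefficients of $(t_1\cdots t_n)^{p^s-1}$ in the identity $f^{p^s}\cdot\sigma(1/f)\equiv \sigma(f^{p^{s-1}-1})\cdot f\pmod{p^s}$, exactly as in the toric proof, yields the central congruence $p^n\sigma(\alpha_{s-1})\equiv F\cdot\alpha_s\pmod{p^s}$, modulo the contribution of $U_{\leq n-1}$ to the expansion map. From here the three congruences of Proposition \ref{CYFrob} and the identification of the $p$-adic limit $\alpha_{ns}\sigma(\alpha_{ns-1})^{-1}$ with the Frobenius matrix on $U_0$ under the basis induced by the residue map follow verbatim from the toric argument. The identification $\alpha_1\bmod p=\HW_p$ is already supplied by Corollary \ref{cyHW} together with Lemma \ref{form}.

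The main obstacle is the analogue of Conjecture \ref{key} for the logarithmic $F$-crystal over $G/P$: one would like $U_{\leq n-1}$ to be exactly the kernel of $P$. As in the toric case I would bypass the strong form of this conjecture via the Katz divisibility argument, namely that Frobenius on $U_{\leq n-1}$ is divisible by $p^{n-1}$ while Frobenius on every class in the formal de-Rham cohomology of the Laurent polydisc is divisible by $p^n$. Passing to the subsequence $s=nl$ and clearing an overall factor of $p^{l(n-1)}$ before letting $l\to\infty$ produces the Frobenius and connection matrices without needing to control $\ker P$ exactly. A secondary technical point is verifying that the log structure along the boundary of $\psi$ is compatible with the Laurent expansion map on $H^n_{DR}(X,Y)$; Lemma \ref{form}, which already matches the anticanonical section $s_0$ with its toric-type local model on the affine chart of $Z_{\lw}$, is precisely what is needed to transplant both the Hasse-Witt computation of Corollary \ref{toricHW} and the local Frobenius analysis of Theorem \ref{main2} from the toric setting to $G/P$.
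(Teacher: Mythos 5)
Your plan correctly reproduces the overall architecture of the proof of Theorem \ref{main2}: the logarithmic $F$-crystal on the pair $(X,Y)$, the identification of $Q_n$ with a Tate twist of $Q_{n-1}$, the formal expansion map along the distinguished torus chart, the coefficient-matching congruences, and Katz's $p$-divisibility trick that replaces the strong form of Conjecture \ref{key}. This is indeed what the paper's proof compresses into ``the rest of the proof is the same as toric case,'' and you have identified correctly where Lemma \ref{form} and Corollary \ref{cyHW} enter.

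However, the one genuinely new step that the paper actually supplies --- and which constitutes essentially the entire content of its proof of Theorem \ref{main3} --- is elided in your outline. You write that ``after inverting a suitable monomial coefficient of $g$'' the expansion of $1/g$ exists. That by itself is not enough. The coefficient extraction in the toric proof compares coefficients of $t^{p^kv}$ only for $v$ with strictly positive components; if $1/g$ carried essential negative exponents, the Frobenius comparison after applying the expansion map would not deliver the stated congruences. The paper resolves this by a structural observation about the torus chart: because it is induced from the affine chart $\mathbb{A}^n\subset Z_{\lw_P}$ on the Bott-Samelson resolution, any anticanonical $g$ factors as $g(t)=t_1^{-k_1}\cdots t_n^{-k_n}g'(t)$ with $g'$ an honest polynomial, where the $k_i$ are the multiplicities along the exceptional divisor $\partial Z_{\lw_P}$; and the hypothesis that $Y$ is away from the image of the origin is precisely what gives $g'(0)\neq 0$. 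It then follows that $1/g = t^{k}/g'$ has non-negative exponents after inverting $g'(0)$, so the indices $p^kv$ retain positive components and the argument of Theorem \ref{main2} applies verbatim. This verification is the new content you need to add; without it, ``inverting a suitable monomial coefficient'' is a gesture rather than a proof that the congruence machinery can be transported from the toric to the $G/P$ setting.
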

\begin{proof}
The function $g(t)=t_1^{-k_1}\cdots t_n^{-t_k}g^\prime(t)$ for some $k_i\geq 0$ and $g^\prime(t)\in R[t_1, \cdots, t_n]$ does not vanish at $(t_1, \cdots, t_n)=0$. This is because the torus chart extends to an map on $\mathbb{A}^n$ and $k_i$ are the multiplicity of exceptional divisor. So the index $p^kv$ appearing in the proof of Theorem \ref{main2} still has positive components. The rest of the proof is the same as toric case. 
\end{proof}
In Section \ref{flagv}, the period integral for the Calabi-Yau hypersurfaces in $G/P$ is reduced to similar algorithm on the torus chart. So the same argument in Section \ref{CYunitroot} implies similar relations between periods and the unit root of zeta-function of Calabi-Yau hypersurface defined on finite fields. 

\begin{rem}
We require the non-vanishing condition in Theorem \ref{main2} and \ref{main3} to discuss the local expansion map. But the definition of matrix $\alpha_s$ and congruence relations do not require this condition. So there might be a proof for general cases not depending on local expansions.
\end{rem}

\bibliography{reference3.bib}
\bibliographystyle{abbrv}
\end{document}